\documentclass[11pt]{article}
\usepackage{amsthm, amsmath, amssymb,mathrsfs, enumerate, fullpage, mathtools, extpfeil}
\mathtoolsset{showonlyrefs=true} 
\usepackage{colonequals}
\usepackage{microtype}
\usepackage{tocloft}
\allowdisplaybreaks 

\usepackage{caption}
\usepackage{booktabs}
\usepackage{makecell}

\usepackage{color}
\usepackage{comment}
\definecolor{darkerGreen}{RGB}{0,195,0} 
\definecolor{darkerRed}{RGB}{240,0,0}
\usepackage[colorlinks=true, linkcolor=darkerRed, citecolor=darkerGreen]{hyperref}
\numberwithin{equation}{section}

\theoremstyle{plain}
\newtheorem{thm}{Theorem}[section]
\newtheorem*{theorem*}{Theorem}
\newtheorem*{question*}{Question}
\newtheorem{theorem}[thm]{Theorem}
\newtheorem{corollary}[thm]{Corollary}
\newtheorem{lemma}[thm]{Lemma}
\newtheorem{proposition}[thm]{Proposition}

\theoremstyle{remark} 
\newtheorem{remark}[thm]{Remark}

\newtheorem*{example*}{Example}

\theoremstyle{definition}
\newtheorem{definition}[thm]{Definition}
\newtheorem{assumption}[thm]{Assumption}

 \newcommand{\diag}{{\operatorname{diag}}}

\newcommand{\BN}{{\mathbb {N}}}

\newcommand{\BR}{{\mathbb {R}}}

\newcommand{\CB}{{\mathcal {B}}}

\newcommand{\CK}{{\mathcal {K}}}
\newcommand{\CL}{{\mathcal {L}}}

\newcommand{\CT}{{\mathcal {T}}}

\newcommand{\dd}{{\,\mathrm{d}}}
\newcommand{\Id}{\mathrm{Id}}

\usepackage[OT2,T1]{fontenc}
\DeclareSymbolFont{cyrletters}{OT2}{wncyr}{m}{n}
\DeclareMathSymbol{\Sha}{\mathalpha}{cyrletters}{"58}

\newcommand{\norm}[1]{\left\Vert #1\right\Vert}

\begin{document}

\title{Regularity of Second-Order Elliptic PDEs in Spectral Barron Spaces}
\author{Ziang Chen, Liqiang Huang, Mengxuan Yang, and Shengxuan Zhou}
\date{}
\maketitle

\begin{abstract}
We establish a regularity theorem for second-order elliptic PDEs on $\mathbb{R}^{d}$ in spectral Barron spaces. Under mild ellipticity and smallness assumptions, the solution gains two additional orders of Barron regularity. As a corollary, we identify a class of PDEs whose solutions can be approximated by two-layer neural networks with cosine activation functions, where the width of the neural network is independent of the spatial dimension.
 \end{abstract}

\section{Introduction}

Solving partial differential equations (PDEs) in high-dimensional spaces is a fundamental challenge in computational mathematics, primarily due to the rapid growth of computational complexity with respect to the spatial dimension $d$. For classical numerical methods, such as the finite element method, achieving an approximation accuracy of order $\varepsilon$ requires a computational cost of order $\mathcal{O}(\varepsilon^{-d})$. This phenomenon is commonly referred to as the \emph{curse of dimensionality} (CoD). Recently, a theoretical framework has been developed to address the CoD by approximating solutions of high-dimensional PDEs with two-layer neural networks
\begin{equation}
\label{eq:twonnw}
u_n(x) = \frac{1}{n} \sum_{i=1}^n a_i \sigma(w_i^{\top} x + b_i).
\end{equation}

One mathematical foundation underlying such approximation results is related to the so-called Barron spaces. 
In his seminal work~\cite{Barron93}, Barron proved that any function $g\colon\mathbb{R}^d\to\mathbb{R}$ whose Fourier transform $\widehat{g}$ satisfies
\begin{equation}
\label{eq:defbarronfunc}
C_g \coloneqq \int_{\mathbb{R}^d} |\widehat{g}(\xi)| \cdot |\xi| \,\mathrm{d}\xi < \infty,
\end{equation}
admits an $L^2$-approximation by a two-layer neural network of the form~\eqref{eq:twonnw} with accuracy $\varepsilon$, where the number of neurons $n$ can be bounded by
\[
\left\lceil C_g^2\varepsilon^{-2}\right\rceil.
\]
In particular, the approximation complexity depends on the Barron norm $C_g$ rather than explicitly on the spatial dimension $d$, i.e., it avoids exponential dependence on $d$ when $C_g$ is not. 
Following~\cite{Barron93}, numerous variants and generalizations of Barron-type function classes have been developed; see, for instance,~\cite{Jason18, E, EWoj22, SIEGELXu21, SX23, SW24}.

To take advantage of this strong approximation property in numerical PDEs, a natural strategy is the following: if one can prove that the exact solution $u^*$ of a given PDE belongs to a Barron space and that its Barron norm can be bounded independently of the spatial dimension, then Barron's theorem guarantees that $u^*$ admits a neural network approximation free from the CoD. This connection reduces the problem of the existence of a well-behaved two-layer neural network approximation for high-dimensional PDEs to the following fundamental analysis questions:
\begin{question*}
    Under what conditions do solutions of PDEs belong to a Barron space?
\end{question*}

Our goal is to establish conditions under which solutions of PDEs belong to Barron spaces with quantitatively controlled norms. To this end, we adopt a slightly modified definition of Barron functions compared to Barron's original definition~\eqref{eq:defbarronfunc}, focusing on the \emph{spectral Barron space} (with order $s$) defined as follows (cf.~Definition~\ref{def:spectral_Barron}):
\begin{equation}
	\label{eq:Bs}
	\mathcal{B}^s \coloneqq 
	\left\{ g \colon \mathbb{R}^d \to \mathbb{R} \;\middle|\;
	\|g\|_{\mathcal{B}^s}
	\coloneqq
	2^{s/2}
	\int_{\mathbb{R}^d}
	|\widehat{g}(\xi)|
	(1+|\xi|^2)^{s/2}
	\,\mathrm{d}\xi
	< \infty
	\right\}.
\end{equation}
Determining conditions under which solutions to specific PDEs live in these spectral Barron spaces is commonly referred to as the \emph{Barron regularity problem}.

Since second-order elliptic PDEs admit well-established Sobolev regularity results and have been extensively studied in the classical literature (e.g.,~\cite{Evans10}), it is natural to ask whether an analogous regularity theory can be developed in spectral Barron spaces. This direction was first explored in~\cite{LLWang2021}, where a Barron regularity theorem was established for the Schr\"{o}dinger equation on a bounded domain.
Subsequently,~\cite{CLLZ2023} obtained the first Barron regularity result for the Schr\"{o}dinger equation on $\mathbb{R}^d$. 
This approach was further extended in~\cite{FL25} to Schr\"{o}dinger-type equations with first-order terms, while~\cite{shuailu25} considered cases involving small Barron perturbations in the coefficients.
However, the existing Fourier-analytic frameworks primarily address equations in which the second-order operator has constant coefficients (for instance, $A(x)=I_d$ in~\eqref{eq:PDE}). 
As a result, these methods do not directly extend to general second-order elliptic PDEs with variable coefficients in the leading-order terms. 
The main mathematical difficulty arises from the variable-coefficient operator $\nabla \cdot (A(x)\nabla )$, which introduces convolution structures under the Fourier transform and prevents a direct application of the arguments developed in the aforementioned works.

\subsection*{Our Contribution}
In this work, we answer the above question by establishing a regularity theorem for general second-order elliptic PDEs of the form
\begin{equation}\label{eq:PDE}
-\nabla\cdot(A(x)\nabla u)+b(x)\cdot\nabla u+c(x)u=f(x)
\quad \text{on } \mathbb{R}^{d}.
\end{equation}
Our main result can be informally summarized as follows; precise statements are given in Theorem~\ref{thm:regularitythm} and Corollary~\ref{cor:dimind}.
\begin{theorem*}[Informal version]
Suppose that the coefficients of~\eqref{eq:PDE} satisfy assumptions (A1)--(A3) in Assumption~\ref{assum}. 
Then for any source term $f \in \mathcal{B}^{s}$, the unique solution $u^*$ to~\eqref{eq:PDE} gains two additional orders of Barron regularity, and satisfies the estimate
\[
\|u^*\|_{\mathcal{B}^{s+2}}
\leq C \|f\|_{\mathcal{B}^{s}},
\]
where the constant $C$ depends on the Barron norms of the coefficients, as well as on the dimension $d$ and the order $s$. 
Additionally, if Assumption (A3') is satisfied, then the constant $C$ can be made explicit and has no explicit dependence on the spatial dimension.
\end{theorem*}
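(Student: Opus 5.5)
We treat the informal theorem as a perturbation result around a constant-coefficient elliptic operator, working throughout in Fourier space where the $\mathcal{B}^s$ norms are weighted $L^1$ norms of $\widehat{g}$. First we split $A(x) = A_0 + \tilde A(x)$, where $A_0$ is a constant symmetric positive definite matrix. A natural choice is the mean part, i.e.\ the coefficient of the Dirac mass at $\xi=0$ in $\widehat{A}$. We similarly split $c(x) = c_0 + \tilde c(x)$ with $c_0>0$. Assumptions (A1)--(A3) should provide uniform ellipticity $A_0 \ge \lambda I_d$, positivity $c_0>0$, and smallness of $\tilde A$, $b$, $\tilde c$ in suitable Barron norms ($\mathcal{B}^{s}$- or $\mathcal{B}^{|s|}$-type, depending on the algebra structure needed). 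The equation then reads
\[
L_0 u = f + \nabla\cdot(\tilde A\nabla u) - b\cdot\nabla u - \tilde c\, u \eqqcolon f + Pu, \qquad L_0 = -\nabla\cdot(A_0\nabla)+c_0 .
\]

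The first step is that $L_0$ has Fourier symbol $m(\xi)=\xi^\top A_0\xi + c_0 \ge \min(\lambda,c_0)(1+|\xi|^2)$. Hence $L_0^{-1}$ is a Fourier multiplier and
\[
\|L_0^{-1}g\|_{\mathcal{B}^{s+2}} \le \frac{2}{\min(\lambda,c_0)}\|g\|_{\mathcal{B}^s}.
\]
This is immediate because the $\mathcal{B}^s$ norm is a weighted $L^1$ norm of $\widehat g$. The second step is a product estimate: convolution in Fourier space combined with Peetre's inequality $(1+|\xi|^2)^{s/2}\le 2^{|s|/2}(1+|\xi-\eta|^2)^{|s|/2}(1+|\eta|^2)^{s/2}$ gives $\|gh\|_{\mathcal{B}^s}\le \|g\|_{\mathcal{B}^{|s|}}\|h\|_{\mathcal{B}^s}$. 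The normalization factor $2^{s/2}$ in the definition is presumably chosen exactly so that this holds with constant one. Using $\|\partial_j v\|_{\mathcal{B}^{s}}\le \|v\|_{\mathcal{B}^{s+1}}$ and the product estimate, we obtain
\[
\|Pu\|_{\mathcal{B}^s}\le \Bigl(\textstyle\sum_{ij}\|\tilde A_{ij}\|_{\mathcal{B}^{|s|+1}}+\sum_j\|b_j\|_{\mathcal{B}^{|s|}}+\|\tilde c\|_{\mathcal{B}^{|s|}}\Bigr)\|u\|_{\mathcal{B}^{s+2}} \eqqcolon \delta\,\|u\|_{\mathcal{B}^{s+2}}.
\]
The divergence term is handled by writing $\nabla\cdot(\tilde A\nabla u)=\sum_{ij}\partial_i(\tilde A_{ij}\partial_j u)$: we place one derivative outside, losing one order, and apply the product estimate at order $s+1$.

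The third step is a contraction argument. The map $T u = L_0^{-1}(f+Pu)$ sends $\mathcal{B}^{s+2}$ to itself with Lipschitz constant $2\delta/\min(\lambda,c_0)$. Under the smallness assumption (A3) this constant is less than one, so Banach's fixed point theorem gives a unique fixed point $u^*$ satisfying
\[
\|u^*\|_{\mathcal{B}^{s+2}}\le \frac{2/\min(\lambda,c_0)}{1-2\delta/\min(\lambda,c_0)}\|f\|_{\mathcal{B}^s}.
\]
Under (A3'), where the smallness is stated with explicit dimension-free norms, this constant is explicit and involves no $d$. We must also check that this fixed point coincides with "the unique solution" of \eqref{eq:PDE}. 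For this we would invoke uniqueness of weak solutions in $H^1$ (Lax--Milgram under ellipticity) and the embedding of $\mathcal{B}^{s+2}$ into the appropriate space when $s\ge 0$. For negative $s$ we would argue by density instead.

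The main obstacle we expect is the divergence-form term. The convolution structure forces derivatives to fall on $\tilde A$, so the $\ell^1$ sum over the $d^2$ entries may introduce hidden $d$-dependence. The first thing to try is to measure $\tilde A$ by a matrix-valued Barron norm, $\int |\widehat{\tilde A}(\eta)|_{\mathrm{op}}(1+|\eta|^2)^{(|s|+1)/2}\dd\eta$, and to estimate $|\xi^\top\widehat{\tilde A}(\xi-\eta)\eta|\le |\xi|\,|\widehat{\tilde A}(\xi-\eta)|_{\mathrm{op}}|\eta|$ directly. This avoids splitting into components, which should yield the dimension-free constant required by (A3').
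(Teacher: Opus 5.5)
There is a genuine gap. Your contraction treats $\nabla\cdot(\tilde A\nabla u)$, $b\cdot\nabla u$ and $\tilde c\,u$ all as small perturbations, but (A1)--(A3) impose smallness only on $E$, namely $\|E\|_{\mathcal{B}^{s+1}}<\min\{\alpha,m\}$. The lower-order parts $w,v_i\in\mathcal{B}^s$ may have arbitrarily large Barron norm; the only constraint on them is the sign condition $c=\alpha+w\ge 0$. So your sentence ``under the smallness assumption (A3) this constant is less than one'' is false, and your argument at best proves the (A3') half of the statement.

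It is telling that you never use $c\ge 0$. Without it, a large negative $w$ can create a nonzero decaying solution of the homogeneous equation (a bound state). An argument that only sees $\|w\|_{\mathcal{B}^s}$ cannot distinguish $w$ from $-w$, so no such argument can work when $w$ is large.

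The missing idea is a Fredholm argument. The paper uses the contraction only for the principal part $\alpha+\beta\cdot\nabla-\nabla\cdot(A\nabla)$ (Lemma~\ref{lem:mainlem}). It then rewrites \eqref{eq:PDE} as $(I+\mathcal{T})u=(\alpha+\beta\cdot\nabla-\nabla\cdot A\nabla)^{-1}f$, with $\mathcal{T}u=(\alpha+\beta\cdot\nabla-\nabla\cdot A\nabla)^{-1}(wu+v\cdot\nabla u)$. Compactness of $\mathcal{T}$ on $\mathcal{B}^{s+1}$ comes from the Kolmogorov--Riesz theorem in $L^1$, applied to $\widehat{\mathcal{T}u}(\xi)(1+|\xi|^2)^{(s+1)/2}$:
\begin{itemize}
\item tails are controlled by the gain of one derivative;
\item equicontinuity follows by comparing with the constant-coefficient operator built from $M$, and approximating $\widehat{e_{ij}},\widehat{\partial_i e_{ij}},\widehat w,\widehat{v_i}$ by $C_c^\infty$ functions.
\end{itemize}
Injectivity of $I+\mathcal{T}$ then follows from the weak maximum principle on large balls, using $c\ge0$ and $u(x)\to0$ as $|x|\to\infty$ (Riemann--Lebesgue). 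This is also why $C$ is not explicit in the general case: it contains $\|(I+\mathcal{T})^{-1}\|$.

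For the (A3') part your route is viable, and it is a streamlined alternative to the paper's. It needs two repairs.
\begin{itemize}
\item The constant drift $\beta$ is neither assumed small nor in any $\mathcal{B}^s$ (its Fourier transform is a Dirac mass), so $\beta\cdot\nabla$ must go into $L_0$. This costs nothing, since $|\alpha+i\beta^{\top}\xi+\xi^{\top}M\xi|\ge\alpha+\xi^{\top}M\xi\ge\min\{\alpha,m\}(1+|\xi|^2)$.
\item Your derivative bound $\|\partial_j g\|_{\mathcal{B}^s}\le\|g\|_{\mathcal{B}^{s+1}}$ gives a Lipschitz constant $2\delta/\min\{\alpha,m\}$, which needs twice the smallness of (A3'). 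Use instead $\|\partial^{\gamma}g\|_{\mathcal{B}^{s-|\gamma|}}\le 2^{-|\gamma|/2}\|g\|_{\mathcal{B}^s}$ and $\|u\|_{\mathcal{B}^s}\le\frac12\|u\|_{\mathcal{B}^{s+2}}$. Then $\|Pu\|_{\mathcal{B}^s}\le\frac12(\|E\|_{\mathcal{B}^{s+1}}+\|w\|_{\mathcal{B}^s}+\|v\|_{\mathcal{B}^s})\|u\|_{\mathcal{B}^{s+2}}$, so the map is a contraction under (A3') and $\|u^*\|_{\mathcal{B}^{s+2}}\le\frac{2}{\min\{\alpha,m\}-K}\|f\|_{\mathcal{B}^s}$.
\end{itemize}
This single contraction is simpler than the paper's two-stage argument (Lemma~\ref{lem:mainlem} plus a Neumann series for $\mathcal{T}$). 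It is also slightly sharper than \eqref{eq:regestinew2}, which carries an extra factor $(\min\{\alpha,m\}+K)/\min\{\alpha,m\}$.

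Two smaller points:
\begin{itemize}
\item Uniqueness in $\mathcal{B}^{s+2}$ is immediate from the contraction, so you do not need Lax--Milgram. That route would not apply directly anyway, since $\mathcal{B}^{s+2}\not\subset H^1(\mathbb{R}^d)$ in general.
\item The operator-norm refinement for $\tilde A$ is unnecessary. (A3') is already stated with entrywise sums, and the resulting constant depends only on $\alpha,m,K$.
\end{itemize}
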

We emphasize that the smallness assumption in~(A3) on the Barron norm of the second-order perturbation is not merely technical, but reflects an intrinsic requirement of the problem. 
As shown in Remark~\ref{rmk:mainrmk}, relaxing this condition may lead to the failure of ellipticity.

Combining the above Barron regularity estimate with the neural network approximation result in~\cite[Proposition~3.1]{FL25}, restated below as Theorem~\ref{thm:FL25},  we show that solutions to PDEs of the form~\eqref{eq:PDE} can be approximated by two-layer neural networks with cosine activation functions. Moreover, the number of neurons can be independent of the spatial dimension; see Corollary~\ref{cor:maincor} and Appendix~\ref{app:mainremark}. We use a simple but useful example to illustrate our result. 
\begin{example*}
Consider the elliptic equation
\[
-\nabla\cdot\bigl(A(x)\nabla u\bigr)
+ b(x)\cdot\nabla u
+ c(x)u
= \varphi(x)
\quad\text{on }\mathbb{R}^d,
\]
where \(\varphi\) is the standard Gaussian density function. Let
$
\mathbf{1}\coloneqq(1,1,\ldots,1)\in\mathbb{R}^d
$
denote the constant vector of ones, and let \(I_d\) denote the
\(d\times d\) identity matrix. Assume that
\[
\|A-I_d\|_{\mathcal{B}^{1}}
+\|b-\mathbf{1}\|_{\mathcal{B}^{0}}
+\|c-1\|_{\mathcal{B}^{0}}
\leq \frac{1}{2}.
\]
For any bounded domain $\Omega\subseteq \mathbb{R}^d$ with unit volume, our results guarantee that the solution can be approximated with accuracy $\varepsilon$ (in the $H^{2}$ sense) by a two-layer neural network of the form~\eqref{eq:twonnw} with cosine activation, with at most
\[
n = \lceil 36\varepsilon^{-2} \rceil
\]
neurons (cf.~Appendix~\ref{app:mainremark} for more details). Here, the explicit bound on the number of neurons does not depend on the spatial dimension $d$. Any implicit dimensional dependence is confined to the assumption that the Barron norms of the perturbations \(A-I_d\), \(b-\mathbf{1}\), and \(c-1\), as well as that of the source function \(\varphi\), are bounded independently of \(d\).
\end{example*}

Our proof builds on the Fourier-analytic framework developed in~\cite{CLLZ2023,FL25}. To address the difficulties arising from variable leading-order coefficients, we decompose \(A(x)\) into a constant matrix and a variable perturbation satisfying a quantitative spectral Barron norm bound, thereby separating the constant-coefficient contribution from the variable component. 
Within this setting, we apply the Banach fixed-point theorem in spectral Barron spaces and establish compactness of the associated operator via the Kolmogorov--Riesz theorem. 
In verifying the conditions of the Kolmogorov--Riesz theorem, we further isolate the contribution of the constant-coefficient part, which leads to a compactness argument different from those used in~\cite{CLLZ2023, FL25, shuailu25}.

A key ingredient in our analysis is the development of tools adapted to spectral Barron spaces.
Since $\mathcal{B}^{s}$ is a Banach space and lacks the Hilbert space structure of classical Sobolev spaces $H^{s}(\mathbb{R}^d)$, many standard PDE techniques are not directly applicable. 
Our approach resolves the convolution difficulty caused by variable coefficients while simultaneously overcoming the absence of Hilbert space structure, thereby extending the constant-coefficient regularity framework to a substantially broader class of elliptic operators.

\subsection*{Related Works}
Apart from our approach, which is based on establishing that the solution itself belongs to a Barron space, another line of work constructs approximate solutions to~\eqref{eq:PDE} from representation results. 
More precisely, if the coefficients are sufficiently regular so that the solution admits a Barron representation without exponential dependence on the dimension, then the solution can be efficiently approximated by a two-layer neural network. 
This idea was first realized in~\cite{Chen21}.  
Related questions for elliptic PDEs on bounded domains have also been studied recently. 
For instance,~\cite{Mar23} considered the case where the domain is a hypercube, while~\cite{CHuang25} further investigated Neumann boundary conditions on hypercubes.

\subsection*{Organization of the Paper}
\begin{itemize}
	\item In Section~\ref{sec:setup}, we introduce spectral Barron spaces and state our main regularity result, Theorem~\ref{thm:regularitythm} and Corollary~\ref{cor:dimind}, together with a sketch of the proof. We also present the neural network approximation result in Corollary~\ref{cor:maincor}.
	\item In Section~\ref{sec:proofs}, we establish the basic properties of spectral Barron spaces and use them to prove the regularity theorem.
    \item In Appendix~\ref{app}, we provide an alternative proof of Lemma~\ref{lem:mainlem}.
    \item In Appendix~\ref{app:mainremark}, we give an example to illustrate the dimension-independence of the neural network approximation in Corollary \ref{cor:maincor}.
\end{itemize}

\subsection*{Acknowledgments}
The authors are grateful to Ye Feng, Ryan Goh, Jianfeng Lu, Konstantinos Spiliopoulos, Qian Tang, Xuda Ye, and Caiwei Zhang for helpful discussions, and acknowledge the Princeton Machine Learning Theory Summer School 2025 for providing collaboration opportunities. The work of Z.~Chen is supported in part by the NSF grant DMS-2509011. M.~Yang acknowledges the support of AMS-Simons travel grant, NSF grant DMS-2554813, and the partial support of DARPA grant AIQ-HR001124S0029. The work of S.~Zhou is supported by LabEx CIMI.

\section{Main Results and Outline of the Proof}
\label{sec:setup}
Throughout this paper, for any real-valued function $ g $ defined on $\mathbb{R}^d$, 
we denote by $ \widehat{g} $ its Fourier transform, defined as
\begin{equation}
\label{eq:Fourier}
\widehat{g}(\xi)
\coloneqq \frac{1}{(2\pi)^d} \int_{\mathbb{R}^d} g(x) e^{-i x^{\top} \xi} \, \mathrm{d}x.
\end{equation}
The inverse Fourier transform is defined by
$$
g^{\vee}(x)
\coloneqq \int_{\mathbb{R}^d} g(\xi) e^{i x^{\top} \xi} \, \mathrm{d}\xi.
$$

\begin{definition}
\label{def:spectral_Barron}
Let $ g \colon \mathbb{R}^d \to \mathbb{R} $ be a real-valued function such that $ \widehat{g} \in L^1(\mathbb{R}^d) $, 
and let $ s \ge 0 $.  
The \emph{spectral Barron norm} of $ g $ is defined by
\begin{equation}
\label{eq:defbarronnorm}
\norm{g}_{\mathcal{B}^s} \coloneqq 2^{s/2}\int_{\mathbb{R}^d} |\widehat{g}(\xi)| (1+|\xi|^2)^{s/2} \dd\xi .
\end{equation}
The corresponding \emph{spectral Barron space} is given by
\begin{equation*}
\mathcal{B}^s \coloneqq 
\left\{ g \colon \mathbb{R}^d \to \mathbb{R} \; \middle| \; \widehat{g} \in L^1(\mathbb{R}^d), \; \norm{g}_{\mathcal{B}^s} < \infty \right\}.
\end{equation*}
\end{definition}
We choose the normalization constant $2^{s/2}$ so that the spectral Barron space $\mathcal{B}^s$ forms a Banach algebra (cf.~Proposition~\ref{prop:mainprop}). This choice is made solely to simplify the analysis.

Different variants of Barron-type norms have been developed in the literature 
since Barron's original work.  
For example,~\cite{SIEGELXu21, YMing22, Mar23, CLLZ2023, Ming25, FL25, shuailu25} 
adopt a definition consistent with Barron's original formulation, 
where the Barron norm is defined via the Fourier transform;  
these are commonly referred to as the \emph{spectral Barron norms}.  
In this paper, we follow this Fourier-based definition.  
In contrast, several alternative formulations avoid the use of the Fourier transform.  
For instance, some are based on probability measures 
(e.g.,~\cite{Chen21, E, SX23}), 
while others are defined in terms of the eigenfunctions of the Dirichlet eigenvalue problem 
(e.g.,~\cite{LuLu22, Mar23, CHuang25}).  
These are typically referred to simply as the \emph{Barron norms}.

Our result is based on the following assumptions.

\begin{assumption}
\label{assum}
The coefficients in equation~\eqref{eq:PDE} satisfy
\begin{enumerate}
	\item[(A1)] The zeroth-order coefficient satisfies $ c(x) \ge 0 $ and can be written as
\[
c(x) = \alpha + w(x),
\]
where $ \alpha > 0 $ and $ w(x) \in \mathcal{B}^s $ for some fixed $ s \ge 0 $.
\item[(A2)] The first-order coefficient $b(x)$ can be written as  
\[
b(x) = \beta + v(x),
\]
where $\beta \in \mathbb{R}^d$ is a constant vector and 
$v(x) = (v_1(x),\ldots,v_d(x))$ with each $v_i(x) \in \mathcal{B}^s$.
\item[(A3)] The symmetric matrix-valued coefficient \(A(x)\) admits a decomposition 
$$
A(x) = M + E(x),
$$
where $M $ is a constant symmetric matrix satisfying
\[
\xi^{\top} M \xi \ge m |\xi|^2 
\quad \text{for some } m > 0 \text{ and all } \xi \in \mathbb{R}^d,
\]
and $ E(x) = (e_{ij}(x))_{1 \le i,j \le d} $ is a symmetric matrix-valued perturbation with $ e_{ij}(x) \in \mathcal{B}^{s+1} $. Moreover,
\begin{equation}\label{eq:B1B2}
\|E(x)\|_{\CB^{s+1}} \coloneqq 
\sum_{i,j=1}^{d} 
\|e_{ij}(x)\|_{\mathcal{B}^{s+1}}<\min\{\alpha,m\}.
\end{equation}
\item[(A3')]
Same as (A3), except that~\eqref{eq:B1B2} is replaced by
\begin{equation}
    \label{eq:B1B2new}
     \|E(x)\|_{\CB^{s+1}}+ \|w(x)\|_{\CB^s} + \|v(x)\|_{\CB^s}\leq K < \min\{\alpha,m\},
\end{equation}
where
\begin{equation}
    \label{eq:newv}
\|v(x)\|_{\CB^s} \coloneqq \sum_{i=1}^{d} \|v_i(x)\|_{\CB^s}.
\end{equation}
\end{enumerate}
\end{assumption}

\begin{remark}
\label{rmk:mainrmk}
One motivation for condition~\eqref{eq:B1B2} is that the quantitative bound on the Barron norm of \(E(x)\) ensures the uniform ellipticity of \(A(x)\), a standard assumption for second-order elliptic PDEs. Indeed, for any function \(g\in \mathcal{B}^{s+1}\), the Fourier inversion formula gives
\begin{equation*}
\label{eq:pointwise-Barron}
|g(x)|
=\left|\int_{\BR^d}\widehat g(\xi)e^{ix^{\top}\xi}\dd\xi\right|
\le \int_{\BR^d}|\widehat g(\xi)|\dd\xi
=\norm{g}_{\mathcal B^0}
\le 2^{-(s+1)/2}\norm{g}_{\mathcal B^{s+1}},
\end{equation*}
for all $x\in\BR^d$. 
Consequently,
\[
|\xi^{\top}E(x)\xi|
\leq \sum_{i,j=1}^d
|e_{ij}(x)|\,|\xi_i|\,|\xi_j|
\leq 2^{-(s+1)/2}\norm{E}_{\mathcal B^{s+1}}|\xi|^2.
\]
It follows that
\[
\xi^\top A(x)\xi
=\xi^\top M\xi+\xi^\top E(x)\xi
\geq
\left(
m-2^{-(s+1)/2}\|E\|_{\mathcal{B}^{s+1}}
\right)|\xi|^2.
\]
Since condition~\eqref{eq:B1B2} implies
\(\|E\|_{\mathcal{B}^{s+1}}<m\), the quantity in parentheses is strictly positive. Therefore, \(A(x)\) is uniformly elliptic.

\end{remark}

We now state our regularity theorem, followed by its dimension-independent version presented as a corollary.
\begin{theorem}
	\label{thm:regularitythm}
Suppose that (A1)--(A3) in Assumption~\ref{assum} hold with $s\geq 0$.
Then, for any $ f \in \mathcal{B}^s $,  there exists a unique solution $ u^* \in \mathcal{B}^{s+2} $ to~\eqref{eq:PDE}.  
Moreover, the solution satisfies the estimate
\begin{equation}
\label{eq:regesti}
\|u^*\|_{\mathcal{B}^{s+2}} \le C \|f\|_{\mathcal{B}^s},
\end{equation}
where $C>0$ is the constant in~\eqref{eq:elliptic-bound},
depending only on the quantities $\alpha$, $\beta$, $M$, 
the Barron norms of $E(x)$, $w(x)$, and $v(x)$, 
the order $s$ in Assumption~\ref{assum}, 
and the spatial dimension $d$.
\end{theorem}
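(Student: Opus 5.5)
We split the operator into its constant-coefficient part and a perturbation, and recast the PDE as a fixed-point problem in $\mathcal{B}^{s+2}$. Write
\[
L_0 u \coloneqq -\nabla\cdot(M\nabla u)+\beta\cdot\nabla u+\alpha u, \qquad P u \coloneqq -\nabla\cdot(E\nabla u)+v\cdot\nabla u+w u,
\]
so that \eqref{eq:PDE} reads $L_0u+Pu=f$.

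\textbf{Step 1: inverting $L_0$.} The symbol of $L_0$ is $\xi^\top M\xi+i\beta^\top\xi+\alpha$. Its modulus is at least
\[
\xi^\top M\xi+\alpha\ \ge\ m|\xi|^2+\alpha\ \ge\ \min\{m,\alpha\}(1+|\xi|^2).
\]
Hence $L_0^{-1}$, defined by dividing $\widehat f$ by the symbol, maps $\mathcal{B}^s\to\mathcal{B}^{s+2}$. Tracking the $2^{s/2}$ normalization gives
\[
\|L_0^{-1}f\|_{\mathcal{B}^{s+2}}\le \tfrac{2}{\min\{m,\alpha\}}\|f\|_{\mathcal{B}^s}.
\]
The factor $2$ is exactly what the normalization produces. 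If it appears, I will try to absorb it through the sharper pointwise bound on the symbol.

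\textbf{Step 2: bounding the perturbation.} Using the Banach algebra property of $\mathcal{B}^s$ (Proposition~\ref{prop:mainprop}), we estimate $P$ term by term. Expanding the leading part,
\[
-\nabla\cdot(E\nabla u)=-\sum_{i,j}\bigl(\partial_i e_{ij}\,\partial_j u+e_{ij}\,\partial_{ij}u\bigr).
\]
Each piece is a product of a $\mathcal{B}^s$ function with a derivative of $u$. The relevant inclusions are $\partial_i e_{ij}\in\mathcal{B}^s$ with $\|\partial_i e_{ij}\|_{\mathcal{B}^s}\lesssim\|e_{ij}\|_{\mathcal{B}^{s+1}}$, and $\|\partial_{ij}u\|_{\mathcal{B}^s}\lesssim\|u\|_{\mathcal{B}^{s+2}}$. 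These give
\[
\|Pu\|_{\mathcal{B}^s}\le C_1\|E\|_{\mathcal{B}^{s+1}}\|u\|_{\mathcal{B}^{s+2}}+C_2\bigl(\|v\|_{\mathcal{B}^s}+\|w\|_{\mathcal{B}^s}\bigr)\|u\|_{\mathcal{B}^{s+1}}.
\]
The lower-order part is controlled by $\|u\|_{\mathcal{B}^{s+1}}$, one order below the target space.

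\textbf{Step 3: solving the equation.} The equation is equivalent to
\[
u+L_0^{-1}P_Eu+L_0^{-1}P_{\mathrm{low}}u=L_0^{-1}f \quad\text{on } \mathcal{B}^{s+2}.
\]
Under \eqref{eq:B1B2}, with constants tracked carefully so that the normalization absorbs them, $\|L_0^{-1}P_E\|<1$. Then $I+L_0^{-1}P_E$ is invertible by a Neumann series (the Banach fixed-point argument). The remaining operator $T\coloneqq L_0^{-1}P_{\mathrm{low}}$ should be compact on $\mathcal{B}^{s+2}$. The reason is that it gains one derivative, and multiplication by $v,w\in\mathcal{B}^s$ behaves like a convolution with an $L^1$ weight on the Fourier side. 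Compactness is checked via Kolmogorov--Riesz in the weighted $L^1$ space of $\widehat u$, which requires tightness and equicontinuity of translations. The equation then becomes (Fredholm index zero) $(I+K)u=g$ with $K$ compact, so by the Fredholm alternative it suffices to show uniqueness.

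\textbf{Step 4: uniqueness and the estimate.} A solution $u\in\mathcal{B}^{s+2}$ of the homogeneous equation lies in $H^1$ locally with decay. We apply a maximum principle, or an energy argument, for the uniformly elliptic operator of Remark~\ref{rmk:mainrmk} with $c\ge0$, $\alpha>0$, and $u$ vanishing at infinity (Riemann--Lebesgue), to conclude $u=0$. The bounded inverse theorem then gives \eqref{eq:regesti}.

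\textbf{Main obstacle.} I expect the main obstacle to be compactness in Step 3. On $\mathbb{R}^d$, tightness at high frequencies and near $\xi\to\infty$ must come from the smoothing of $L_0^{-1}$, while the convolution structure spreads mass. A second risk is that the $E$-term may not quite contract with these constants. In that case I would instead place $L_0^{-1}P_E$ alongside the compact part, separating its high-frequency (contractive) and low-frequency pieces.
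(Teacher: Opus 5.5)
\textbf{The gap is in the contraction step.} Your architecture matches the paper's, but the contraction in Step~3 does not close under (A3) as you have set it up. The architecture is: invert $L_0$, treat $-\nabla\cdot(E\nabla\,\cdot)$ as a contraction, handle $wu+v\cdot\nabla u$ by compactness and Fredholm theory, and get uniqueness from the maximum principle.

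The factor $2/\min\{\alpha,m\}$ in Step~1 cannot be absorbed through the symbol. For $\beta=0$, $M=mI$, $\alpha=m$ the symbol equals $m(1+|\xi|^2)$, and then $\|L_0^{-1}f\|_{\mathcal{B}^{s+2}}=\frac{2}{m}\|f\|_{\mathcal{B}^s}$ exactly. So the perturbation estimate itself must supply a compensating factor $\tfrac12$.

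Your Step~2 expands $\nabla\cdot(E\nabla u)$ into $\partial_ie_{ij}\,\partial_ju+e_{ij}\,\partial_{ij}u$ and applies the algebra property in $\mathcal{B}^s$. Tracking the constants of Proposition~\ref{prop:mainprop}, each of the two pieces costs $2^{-3/2}\|e_{ij}\|_{\mathcal{B}^{s+1}}\|u\|_{\mathcal{B}^{s+2}}$. This gives only $\|L_0^{-1}P_E\|\le\sqrt2\,\|E\|_{\mathcal{B}^{s+1}}/\min\{\alpha,m\}$, which fails to be $<1$ when $\|E\|_{\mathcal{B}^{s+1}}\ge\min\{\alpha,m\}/\sqrt2$.

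Your high/low-frequency fallback does not repair this. The loss sits in the product estimate, which does not care where the output frequency lives. The bound for the high-frequency piece still tends to $\sqrt2\|E\|_{\mathcal{B}^{s+1}}/m$, which can exceed $1$ under (A3).

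The missing idea is to keep the divergence structure and use the algebra property one level up, in $\mathcal{B}^{s+1}$. This is exactly why (A3) places $e_{ij}$ in $\mathcal{B}^{s+1}$:
\[
\|\partial_i(e_{ij}\partial_ju)\|_{\mathcal{B}^s}\le 2^{-1/2}\|e_{ij}\partial_ju\|_{\mathcal{B}^{s+1}}\le 2^{-1/2}\|e_{ij}\|_{\mathcal{B}^{s+1}}\|\partial_ju\|_{\mathcal{B}^{s+1}}\le \tfrac12\|e_{ij}\|_{\mathcal{B}^{s+1}}\|u\|_{\mathcal{B}^{s+2}} .
\]
This gives contraction constant exactly $\|E\|_{\mathcal{B}^{s+1}}/\min\{\alpha,m\}<1$, as in Lemma~\ref{lem:mainlem} and Appendix~\ref{app}.

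\textbf{The compactness step is sound and simpler than the paper's.} Once the contraction is fixed, the rest of your plan works. Since $\mathcal{L}=L_0+P_E=L_0(I+L_0^{-1}P_E)$, your compact operator $K=(I+L_0^{-1}P_E)^{-1}L_0^{-1}P_{\mathrm{low}}$ is precisely the paper's $\mathcal{T}$, now viewed on $\mathcal{B}^{s+2}$. You get its compactness by composing a bounded operator with the constant-coefficient operator $L_0^{-1}P_{\mathrm{low}}$. Compactness of $L_0^{-1}P_{\mathrm{low}}$ on $\mathcal{B}^{s+2}$ follows from Kolmogorov--Riesz as in the constant-coefficient setting. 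Tightness comes from the extra decay $(1+|\eta|^2)^{-1/2}$ gained on $\widehat u$ and from the $L^1$ tails of $\widehat w$ and $\widehat{v_i}$; equicontinuity comes from $L^1$-continuity of translations of $\widehat w$ and $\widehat{v_i}$. The paper instead proves compactness of $\mathcal{L}^{-1}P_{\mathrm{low}}$ on $\mathcal{B}^{s+1}$ directly. That forces the $F-G$ splitting and the convolution estimates for the term (E1), all of which your factorization avoids.

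\textbf{Uniqueness.} In Step~4, use the maximum-principle route, as the paper does: $u\in C^2$, $u\to0$ at infinity, and $c\ge0$. The energy alternative is not available. $\mathcal{B}^{s+2}\not\subset H^1(\mathbb{R}^d)$ in general, and no sign condition on $c-\tfrac12\nabla\cdot b$ is assumed.
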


\begin{corollary}
    \label{cor:dimind}
    Suppose that (A1), (A2), and (A3') in Assumption~\ref{assum} hold with $s\geq 0$.
Then, for any $ f \in \mathcal{B}^s $,  there exists a unique solution $ u^* \in \mathcal{B}^{s+2} $ to~\eqref{eq:PDE}.  
Moreover, the solution satisfies the estimate
\begin{align}
\|u^*\|_{\mathcal{B}^{s+2}} &\le \frac{2(\min\{\alpha,m\}-\|E\|_{\CB^{s+1}}+\|w\|_{\CB^s}+\|v\|_{\CB^s})}
{(\min\{\alpha,m\}-\|E\|_{\CB^{s+1}})(\min\{\alpha,m\}-\|E\|_{\CB^{s+1}}-\|w\|_{\CB^s}-\|v\|_{\CB^s})} \|f\|_{\mathcal{B}^s}  \label{eq:regestinew} \\
&\leq \frac{2(\min\{\alpha,m\}+K)}
 {\min\{\alpha,m\}(\min\{\alpha,m\}-K)} \|f\|_{\mathcal{B}^s}.  \label{eq:regestinew2}
\end{align}
\end{corollary}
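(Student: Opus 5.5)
The plan is a direct contraction (Neumann series) argument in $\mathcal{B}^{s+2}$. Under (A3') the full perturbation is small, so no compactness or Fredholm argument is needed and the constant can be tracked explicitly. I would write the equation as $L_0 u + P u = f$, where
\[
L_0 u = -\nabla\cdot(M\nabla u) + \beta\cdot\nabla u + \alpha u, \qquad P u = -\nabla\cdot(E\nabla u) + v\cdot\nabla u + w u .
\]
The operator $L_0$ has Fourier multiplier $\xi^\top M\xi + i\beta\cdot\xi + \alpha$. Its modulus is at least its real part, which is at least $m|\xi|^2+\alpha \ge \min\{\alpha,m\}(1+|\xi|^2)$. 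Because of the $2^{s/2}$ normalization, this gives
\[
\|L_0^{-1} g\|_{\mathcal{B}^{s+2}} \le \frac{2}{\min\{\alpha,m\}}\|g\|_{\mathcal{B}^s}.
\]
Existence and uniqueness for the constant-coefficient problem in $\mathcal{B}^{s+2}$ then follow by defining $\widehat u = \widehat g/(\xi^\top M\xi + i\beta\cdot\xi + \alpha)$.

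Next I need to bound $P$ from $\mathcal{B}^{s+2}$ to $\mathcal{B}^s$. I would use the Banach algebra property $\|gh\|_{\mathcal{B}^s}\le \|g\|_{\mathcal{B}^s}\|h\|_{\mathcal{B}^s}$ (Proposition~\ref{prop:mainprop}). I would also use that derivatives lower the order: $\|\partial_j u\|_{\mathcal{B}^{s}} \le 2^{-1/2}\|u\|_{\mathcal{B}^{s+1}}$, and monotonicity $\|g\|_{\mathcal{B}^{s}}\le 2^{-1/2}\|g\|_{\mathcal{B}^{s+1}}$. With these, the three terms are bounded as follows:
\begin{align}
\|\partial_i(e_{ij}\partial_j u)\|_{\mathcal{B}^s} &\le 2^{-1/2}\|e_{ij}\|_{\mathcal{B}^{s+1}}\|\partial_j u\|_{\mathcal{B}^{s+1}} \le \tfrac12 \|e_{ij}\|_{\mathcal{B}^{s+1}}\|u\|_{\mathcal{B}^{s+2}},\\
\|v_i\partial_i u\|_{\mathcal{B}^s} &\le \|v_i\|_{\mathcal{B}^s}\|u\|_{\mathcal{B}^{s+2}},\\
\|w u\|_{\mathcal{B}^s} &\le \|w\|_{\mathcal{B}^s}\|u\|_{\mathcal{B}^{s+2}}.
\end{align}
This yields $\|P\|\le \tfrac12\|E\| + \|v\|+\|w\|$, where each norm is taken in its respective space. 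Composing with the bound on $L_0^{-1}$ gives
\[
\|L_0^{-1}P\|\le \frac{\|E\|+2\|v\|+2\|w\|}{\min\{\alpha,m\}}.
\]

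The main obstacle is the constant. The naive bound above is too crude to reproduce \eqref{eq:regestinew}; the target has the specific denominators $(\mu-\|E\|)(\mu-\|E\|-\|w\|-\|v\|)$, with $\mu=\min\{\alpha,m\}$. This suggests a two-stage Neumann series.

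In the first stage I would handle $L_1 = L_0 - \nabla\cdot(E\nabla\cdot)$. Here I would use a sharper multiplier estimate that does not discard the factor $2$. I would split $\|\cdot\|_{\mathcal{B}^{s+2}}$ against weights so that $\|L_0^{-1}\nabla\cdot(E\nabla u)\| \le (\|E\|/\mu)\|u\|$ and $\|L_0^{-1}\|_{\mathcal{B}^s\to\mathcal{B}^{s+2}}\le 2/\mu$. This gives
\[
\|L_1^{-1}\|\le \frac{2/\mu}{1-\|E\|/\mu} = \frac{2}{\mu-\|E\|}.
\]

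In the second stage I would treat $v\cdot\nabla + w$ as a perturbation of $L_1$. This requires
\[
\|L_1^{-1}(v\cdot\nabla+w)\| \le \frac{\|v\|+\|w\|}{\mu-\|E\|} < 1,
\]
which should hold after matching the factors $2$ against the derivative gains $2^{-1/2}$. The factor $2$ in the numerator of the stated bound, $2(\mu-\|E\|+\|w\|+\|v\|)$, suggests the authors actually write $u = L_1^{-1}f - L_1^{-1}(v\cdot\nabla+w)u$ and estimate in a slightly different way. I would first try the norm bookkeeping that reproduces exactly $2(\mu-\|E\|+\|w\|+\|v\|)/((\mu-\|E\|)(\mu-\|E\|-\|w\|-\|v\|))$. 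For example, I would bound $\|u\|\le \|L_1^{-1}f\| + \|L_1^{-1}(v\cdot\nabla+w)u\|$ with the first term at most $2\|f\|/(\mu-\|E\|)$, then solve the resulting inequality.

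Uniqueness follows from invertibility of $I + L_0^{-1}P$, which holds because the Neumann series converges. The final inequality \eqref{eq:regestinew2} follows by monotonicity: the expression is increasing in $\|w\|+\|v\|$ and in $\|E\|$, and it suffices to substitute the bound by $K$ in the appropriate places.
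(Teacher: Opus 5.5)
Your argument is correct in substance but is organized differently from the paper's. The paper does not re-derive existence and uniqueness. It takes them from Theorem~\ref{thm:regularitythm}, and hence from the Kolmogorov--Riesz compactness of $\mathcal{T}$, the Fredholm alternative, and the weak maximum principle. It then only sharpens the constant $C$ in \eqref{eq:elliptic-bound}: with $\mu=\min\{\alpha,m\}$, it proves $\|\mathcal{T}\|_{\mathcal{B}^{s+1}\to\mathcal{B}^{s+1}}\le(\|w\|_{\mathcal{B}^s}+\|v\|_{\mathcal{B}^s})/(\mu-\|E\|_{\mathcal{B}^{s+1}})$ and sums a Neumann series for $(I+\mathcal{T})^{-1}$.

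Your stage-2 operator $L_1^{-1}(v\cdot\nabla+w)$ is exactly $\mathcal{T}$, with the same numerical bound, but you run the Neumann series in $\mathcal{B}^{s+2}$ and read off the estimate directly. This buys two things. First, the proof is self-contained: under (A3') no compactness or maximum principle is needed. Second, it gives the sharper bound $\|u^*\|_{\mathcal{B}^{s+2}}\le 2\|f\|_{\mathcal{B}^s}/(\mu-\|E\|-\|w\|-\|v\|)$. This implies \eqref{eq:regestinew}, because the paper's constant equals yours times $(\mu-\|E\|+\|w\|+\|v\|)/(\mu-\|E\|)\ge 1$. So you need not reproduce the paper's constant exactly. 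Its extra numerator factor is an artifact of the two-step estimate in the proof of Theorem~\ref{thm:regularitythm}, which first bounds $\|u^*\|_{\mathcal{B}^{s+1}}$ and then returns to $\mathcal{B}^{s+2}$ while discarding the $2^{-1/2}$ gains. The paper's route, for its part, makes the corollary a direct specialization of the general constant \eqref{eq:elliptic-bound}, which is valid under (A3) alone.

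Two bookkeeping points need repair. First, your preliminary bound drops factors of $\tfrac12$. By Proposition~\ref{prop:mainprop}, $\|wu\|_{\mathcal{B}^s}\le\tfrac12\|w\|_{\mathcal{B}^s}\|u\|_{\mathcal{B}^{s+2}}$ and $\|v_i\partial_iu\|_{\mathcal{B}^s}\le\tfrac12\|v_i\|_{\mathcal{B}^s}\|u\|_{\mathcal{B}^{s+2}}$. Hence $\|L_0^{-1}P\|_{\mathcal{B}^{s+2}\to\mathcal{B}^{s+2}}\le(\|E\|+\|w\|+\|v\|)/\mu\le K/\mu<1$. A single Neumann series therefore suffices, and no ``sharper multiplier estimate'' is needed in stage 1, since the bound $\|E\|/\mu$ is just $\tfrac{2}{\mu}\cdot\tfrac12\|E\|$. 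Your closing uniqueness sentence needs this corrected bound, because $(\|E\|+2\|v\|+2\|w\|)/\mu$ need not be below $1$ under (A3'). Alternatively, use the factorization $L_0+P=L_1\bigl(I+L_1^{-1}(v\cdot\nabla+w)\bigr)$ with both factors invertible.

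Second, your justification of \eqref{eq:regestinew2} is wrong as stated. The right side of \eqref{eq:regestinew} is increasing in $\|E\|$, yet the target is its value at $\|E\|=0$, $\|w\|+\|v\|=K$, so you cannot insert $K$ for each quantity separately. Use the joint constraint instead. With $a=\|E\|$ and $b=\|w\|+\|v\|\le K-a$, monotonicity in $b$ gives the bound $\frac{2(\mu+K-2a)}{(\mu-a)(\mu-K)}$. Its derivative in $a$ has the sign of $K-\mu<0$, so the maximum is at $a=0$. With your sharper bound the step is immediate: $\frac{2}{\mu-a-b}\le\frac{2}{\mu-K}\le\frac{2(\mu+K)}{\mu(\mu-K)}$.
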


We sketch the outline of the proof of Theorem~\ref{thm:regularitythm} below. 
The detailed arguments will be presented in Section~\ref{sec:proofs}.
\begin{enumerate}[Step 1.]
	\item We apply the Banach fixed-point theorem together with the Banach algebra property of spectral Barron spaces to establish the regularity of 
	$ (\alpha+ \beta \cdot \nabla - \nabla \cdot A(x) \nabla)u = f $.
	
	\item The solution of~\eqref{eq:PDE} can be written as 
	\[
	u = (I + \mathcal{T})^{-1} (\alpha +\beta \cdot \nabla- \nabla \cdot A(x) \nabla)^{-1} f,
	\]
	where $ \mathcal{T}(u) \coloneqq (\alpha+\beta \cdot \nabla - \nabla \cdot A(x) \nabla)^{-1} (w u + v\cdot\nabla u) $.
	By Step~1, it suffices to show that the operator $ I + \mathcal{T} $ has a bounded inverse.
	
	\item We prove that $ \mathcal{T} $ is compact by the Kolmogorov--Riesz theorem, hence $ I + \mathcal{T} $ is a Fredholm operator. 
	Its bounded invertibility then reduces to injectivity, i.e., the equation~\eqref{eq:PDE} admits only the zero solution in spectral Barron spaces when $ f = 0 $, which follows from the weak maximum principle. 
\end{enumerate}
Although the above strategy follows the same general framework as in~\cite{CLLZ2023, FL25}, 
the analysis requires more technical effort due to the appearance of $ A(x) $ in the second-derivative term. 
We summarize below the main difficulties and how we overcome them:
\begin{enumerate}[(1)]
	\item The regularity of 
$ (\alpha+\beta \cdot \nabla - \nabla \cdot A(x) \nabla)u = f $ 
cannot be obtained directly via the Fourier transform, 
as in the constant-coefficient case, 
because variable coefficients lead to convolution terms. 
To address this, we decompose $A(x)$ into a constant matrix $M$ and a small perturbation $E(x)$; 
the part associated with $M$ is handled explicitly by the inverse Fourier transform.
	\item When proving that $ \mathcal{T} $ is compact, we cannot directly apply the Fourier transform as in~\cite{CLLZ2023, FL25}, for the same reason mentioned above. 
Instead, we replace $ A(x) $ by the constant matrix $ M $ and analyze the difference between $ \mathcal{T}(u) $ and the corresponding operator associated with $ M $ (cf.~\eqref{eq:AxM}).
\end{enumerate}

The mild dependence on the spatial dimension of two-layer neural network approximation for spectral Barron functions was established in~\cite{xu2020finite, CLLZ2023, FL25}, which we recall below.
\begin{theorem}[{\cite[Proposition~3.1]{FL25}}]
\label{thm:FL25}
Suppose $ u \in \mathcal{B}^k $ for some $ k \in \mathbb{N} $, 
and let $ \Omega \subseteq \mathbb{R}^d $ be a bounded domain with Lebesgue measure $ |\Omega| $.  
Then, for any $ n \in \mathbb{N}_{+} $,  
there exists a two-layer neural network of the form 
\begin{equation}
    \label{eq:neural}
u_n(x) = \frac{1}{n} \sum_{i=1}^n a_i \cos(w_i^{\top} x + b_i),
\end{equation}
where $ a_i, b_i \in \mathbb{R} $ and $ w_i \in \mathbb{R}^d $, such that
\[
\|u_n - u\|_{H^k(\Omega)}
\le \frac{\sqrt{|\Omega|}  \|u\|_{\mathcal{B}^k}}{\sqrt{n}}.
\]
\end{theorem}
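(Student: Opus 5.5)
The plan is Maurey's empirical (probabilistic sampling) argument carried out in the Hilbert space $H^k(\Omega)$. First I would write $u$ as an expectation of cosine ridge functions. Since $u$ is real-valued, $\widehat u(-\xi)=\overline{\widehat u(\xi)}$. Writing $\widehat u(\xi)=|\widehat u(\xi)|e^{i\theta(\xi)}$, Fourier inversion (valid pointwise because $\widehat u\in L^1$) gives
\begin{equation*}
u(x)=\mathrm{Re}\int_{\BR^d}\widehat u(\xi)e^{ix^{\top}\xi}\dd\xi=\int_{\BR^d}|\widehat u(\xi)|\cos\bigl(\xi^{\top}x+\theta(\xi)\bigr)\dd\xi .
\end{equation*}
Set $Z\coloneqq\int|\widehat u(\xi)|(1+|\xi|^2)^{k/2}\dd\xi=2^{-k/2}\norm{u}_{\mathcal B^k}$; if $Z=0$ there is nothing to prove. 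Define the probability density $p(\xi)=Z^{-1}|\widehat u(\xi)|(1+|\xi|^2)^{k/2}$ and
\begin{equation*}
g_\xi(x)\coloneqq Z(1+|\xi|^2)^{-k/2}\cos\bigl(\xi^{\top}x+\theta(\xi)\bigr).
\end{equation*}
Then $u(x)=\mathbb E_{\xi\sim p}[g_\xi(x)]$. The weight $|\xi|^{|\alpha|}|\widehat u|$ is integrable for $|\alpha|\le k$, so I can differentiate under the integral and obtain $\partial^\alpha u=\mathbb E_{\xi\sim p}[\partial^\alpha g_\xi]$ pointwise for all $|\alpha|\le k$.

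Next I would bound the $H^k(\Omega)$ norm of a single sample. Each derivative satisfies $|\partial^\alpha g_\xi(x)|\le Z(1+|\xi|^2)^{-k/2}|\xi^{\alpha}|$. By the multinomial expansion, $\sum_{|\alpha|=j}\xi^{2\alpha}\le(\sum_i\xi_i^2)^j=|\xi|^{2j}$, since all multinomial coefficients are at least $1$. Hence
\begin{equation*}
\|g_\xi\|_{H^k(\Omega)}^2\le|\Omega|\,Z^2(1+|\xi|^2)^{-k}\sum_{j=0}^k|\xi|^{2j}\le|\Omega|\,Z^2 .
\end{equation*}
This sample bound is uniform in $\xi$.

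Finally I would sample. Draw $\xi_1,\dots,\xi_n$ i.i.d.\ from $p$ and set $a_i=Z(1+|\xi_i|^2)^{-k/2}$, $w_i=\xi_i$, $b_i=\theta(\xi_i)$, so that $u_n=\frac1n\sum_i g_{\xi_i}$. Expanding the square and using Fubini on $\int_\Omega$ for each derivative, the cross terms vanish by independence and $\mathbb E[\partial^\alpha g_\xi]=\partial^\alpha u$. This gives
\begin{equation*}
\mathbb E\|u_n-u\|_{H^k(\Omega)}^2=\frac{1}{n}\Bigl(\mathbb E\|g_\xi\|_{H^k(\Omega)}^2-\|u\|_{H^k(\Omega)}^2\Bigr)\le\frac{|\Omega|Z^2}{n}\le\frac{|\Omega|\,\norm{u}_{\mathcal B^k}^2}{n}.
\end{equation*}
Hence some realization attains this bound, and taking square roots gives the claim. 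The factor $2^{-k/2}$ in $Z$ only provides slack.

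The only delicate point is measure-theoretic. One has to justify the interchange of expectation with $\int_\Omega$ and with derivatives, and the measurability of $\theta$. I would handle this by working pointwise in $x$ with the dominating bound $|\partial^\alpha g_\xi(x)|\le Z$ and then applying Fubini on $\Omega\times(\BR^d)^n$. This avoids any Bochner-integral machinery.
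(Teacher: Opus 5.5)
Your proof is correct and complete. It is the standard Maurey-type Monte Carlo argument for this estimate: sample $\xi$ from the density proportional to $|\widehat u(\xi)|(1+|\xi|^2)^{k/2}$, use the uniform bound $\|g_\xi\|_{H^k(\Omega)}^2\le|\Omega|Z^2$, and apply the variance identity. The paper gives no proof of its own here and simply imports the result from \cite{FL25}, so there is nothing to compare beyond that. Your computation in fact gives the slightly stronger bound with $2^{-k/2}\norm{u}_{\mathcal B^k}$ on the right-hand side, so the normalization factor $2^{k/2}$ in Definition~\ref{def:spectral_Barron} is only slack here, as you noted.
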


Combining Theorem~\ref{thm:regularitythm} and Theorem~\ref{thm:FL25}, 
we obtain the following corollary, 
which roughly states that if all the functions appearing in~\eqref{eq:PDE} 
are spectral Barron functions, 
then the complexity of approximating the solution by a two-layer neural network is of order $ \mathcal{O}(\varepsilon^{-2}) $.

\begin{corollary}\label{cor:maincor}
Suppose that (A1)--(A3) in Assumption~\ref{assum} hold with
$s = k \in \mathbb{N}$, and that $f \in \mathcal{B}^k$.
Let $ u^* $ be the unique solution to~\eqref{eq:PDE} in $ \mathcal{B}^{k+2} $, and let $ C $ be the constant in Theorem~\ref{thm:regularitythm}.  
Then, for any bounded domain $ \Omega \subseteq \mathbb{R}^d $  
and any $ n \in \mathbb{N}_{+} $,  
there exists a two-layer neural network of the form~\eqref{eq:neural}
such that
\[
\|u_n - u^*\|_{H^{k+2}(\Omega)}
\le \frac{C \sqrt{|\Omega|}  \|f\|_{\mathcal{B}^k}}{\sqrt{n}}.
\]
In particular, in the sense of $H^{k+2}(\Omega)$-norm, the solution $u^*$ can be approximated (with error $\leq \varepsilon $) by a two-layer neural network of the
form~\eqref{eq:neural} with the number of neurons 
$$ n \leq  \left\lceil C^2 |\Omega| \|f\|_{\CB^k}^2 \varepsilon^{-2} \right\rceil.$$  
Additionally, if (A3') in Assumption~\ref{assum} holds, then the constant \(C\) can be improved to that in \eqref{eq:regestinew2}, which is independent of the spatial dimension.
\end{corollary}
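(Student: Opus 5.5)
The corollary follows by combining Theorem~\ref{thm:regularitythm} with Theorem~\ref{thm:FL25}, applied with the order $k+2$ in place of $k$. First, since (A1)--(A3) hold with $s=k\in\mathbb{N}$ and $f\in\mathcal{B}^k$, Theorem~\ref{thm:regularitythm} gives a unique solution $u^*\in\mathcal{B}^{k+2}$ with
\[
\|u^*\|_{\mathcal{B}^{k+2}}\le C\|f\|_{\mathcal{B}^k}.
\]
Because $k+2\in\mathbb{N}$ and $u^*\in\mathcal{B}^{k+2}$, Theorem~\ref{thm:FL25} (with $k$ replaced by $k+2$) applies on the bounded domain $\Omega$. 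For every $n\in\mathbb{N}_+$ it produces a cosine network $u_n$ of the form~\eqref{eq:neural} satisfying
\[
\|u_n-u^*\|_{H^{k+2}(\Omega)}\le \frac{\sqrt{|\Omega|}\,\|u^*\|_{\mathcal{B}^{k+2}}}{\sqrt n}\le \frac{C\sqrt{|\Omega|}\,\|f\|_{\mathcal{B}^k}}{\sqrt n},
\]
which is the first claim.

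For the neuron count, we require the right-hand side to be at most $\varepsilon$. This holds exactly when
\[
n\ge C^2|\Omega|\,\|f\|_{\mathcal{B}^k}^2\varepsilon^{-2},
\]
so it suffices to take $n=\lceil C^2|\Omega|\|f\|_{\mathcal{B}^k}^2\varepsilon^{-2}\rceil$. In the degenerate case $f=0$ this quantity is zero; we then take $n=1$ with $a_1=0$, and the error vanishes because $u^*=0$. With this choice the number of neurons satisfies the stated bound.

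For the final assertion, assume (A3'). Then Corollary~\ref{cor:dimind} applies and replaces $C$ by the constant in~\eqref{eq:regestinew2}, namely
\[
\frac{2(\min\{\alpha,m\}+K)}{\min\{\alpha,m\}(\min\{\alpha,m\}-K)},
\]
which depends only on $\alpha,m,K$ and not explicitly on $d$. Repeating the same two displays with this constant gives the dimension-independent version.

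There is essentially no obstacle here. The only points to check are that the order $k+2$ is a natural number, so that Theorem~\ref{thm:FL25} applies, and that $\Omega$ is bounded, so that $|\Omega|<\infty$. All the substantive work lies in Theorem~\ref{thm:regularitythm} and Corollary~\ref{cor:dimind}.
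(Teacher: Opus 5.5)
Your proposal is correct and follows the paper's argument exactly: you apply Theorem~\ref{thm:regularitythm} with $s=k$ to get $\|u^*\|_{\mathcal{B}^{k+2}}\le C\|f\|_{\mathcal{B}^k}$, feed this into Theorem~\ref{thm:FL25} at order $k+2$, and under (A3') (which implies (A3)) invoke Corollary~\ref{cor:dimind} to replace $C$ by the constant in~\eqref{eq:regestinew2}. One small slip: when $f=0$ the stated neuron bound reads $n\le 0$, which your choice $n=1$ does not satisfy, so that case is better handled by noting that $u^*=0$ is represented exactly by the zero network and that the neuron bound is only meaningful for $f\neq 0$.
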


In Appendix~\ref{app:mainremark}, we provide an example to further explain the ``dimension-independent'' computational complexity stated in the above corollary. 

\begin{remark}
The Barron regularity result is independent of the activation function. The cosine activation in Corollary~\ref{cor:maincor} arises only from Theorem~\ref{thm:FL25}. An analogous dimension-independent approximation theorem for another activation function would yield a corresponding conclusion; see, for example,~\cite[Theorem~3]{SIEGELXu21} for the case of a ReLU activation function.
\end{remark}

\section{Proof of the Regularity Theorem}
\label{sec:proofs}
In this section, we first study the analytical properties of spectral Barron spaces. 
Then we present the three-step proof in detail, following the outline given in Section~\ref{sec:setup}.

\subsection{Basic Properties of Spectral Barron Spaces}
\begin{proposition}
\label{prop:mainprop}
	The spectral Barron space $ \mathcal{B}^s $ with $ s \ge 0 $ satisfies the following properties:
	\begin{enumerate}[(1)]   
    \item $\mathcal{B}^s$ is a Banach algebra. Moreover, for any $ t \ge s $, we have $ \mathcal{B}^{t} \subseteq \mathcal{B}^s $ and 
\[
\|g\|_{\mathcal{B}^{s}} \le 2^{(s - t)/2}\,\|g\|_{\mathcal{B}^{t}} 
\quad \text{for all } g \in \mathcal{B}^{t}.
\]
    
    \item $
\mathcal{B}^s \subseteq C_b^{\lfloor s \rfloor}(\mathbb{R}^d),
$
the space of $ \lfloor s \rfloor $-times continuously differentiable and bounded functions on $ \mathbb{R}^d $,
where $ \lfloor \cdot \rfloor $ is the Gauss floor function.
Moreover, for any $ g \in \mathcal{B}^s $ and any multi-index 
$ \alpha = (j_1, \ldots, j_d) $ with $ |\alpha| = j_1 + \cdots + j_d \le s $,
we have $ \partial^{\alpha} g \in \mathcal{B}^{s - |\alpha|} $ and the estimate
\begin{equation}\label{eq:pfest2}
    \|\partial^{\alpha} g\|_{\mathcal{B}^{s - |\alpha|}} 
    \le 2^{-|\alpha|/2}\|g\|_{\mathcal{B}^{s}}.
\end{equation}
	\end{enumerate}
\end{proposition}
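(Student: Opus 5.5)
The whole proposition reduces to pointwise inequalities for the weight $(1+|\xi|^2)^{s/2}$ combined with the convolution identity for products. For the algebra property, I take $f,g\in\mathcal{B}^s$. With the normalization \eqref{eq:Fourier}, the Fourier transform of a product is the convolution of the transforms, $\widehat{fg}=\widehat f*\widehat g$, where $\widehat f,\widehat g\in L^1$. The key pointwise estimate is
\[
(1+|\xi|^2)^{s/2}\le 2^{s/2}(1+|\xi-\eta|^2)^{s/2}(1+|\eta|^2)^{s/2}.
\]
It follows from $|\xi|^2\le 2|\xi-\eta|^2+2|\eta|^2$, which gives
\[
1+|\xi|^2\le 2\bigl(1+|\xi-\eta|^2+|\eta|^2\bigr)\le 2(1+|\xi-\eta|^2)(1+|\eta|^2),
\]
and then raising to the power $s/2\ge0$. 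Inserting this into
\[
2^{s/2}\int |\widehat f*\widehat g|(\xi)(1+|\xi|^2)^{s/2}\dd\xi
\]
and applying Tonelli gives
\[
\|fg\|_{\mathcal{B}^s}\le 2^{s/2}\cdot 2^{s/2}\int\!\!\int |\widehat f(\xi-\eta)||\widehat g(\eta)|(\ldots)(\ldots)=\|f\|_{\mathcal{B}^s}\|g\|_{\mathcal{B}^s}.
\]
This is exactly where the factor $2^{s/2}$ in the norm is used. Realness of $fg$ is automatic.

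Completeness comes from the map $g\mapsto\widehat g$. It is an isometry from $\mathcal{B}^s$ onto the closed subspace of the weighted space $L^1((1+|\xi|^2)^{s/2}2^{s/2}\dd\xi)$ consisting of functions with $\overline{h(-\xi)}=h(\xi)$, which encodes realness. A Cauchy sequence in $\mathcal{B}^s$ therefore has Fourier transforms converging in this weighted $L^1$. The limit $h$ also satisfies the symmetry relation, and $h^\vee$ is a real, bounded, continuous function whose Fourier transform is $h$ in the distributional sense. Hence $\mathcal{B}^s$ is a Banach algebra. I expect this step to be the most delicate point, though it is mild: one has to fix the convention that elements are the continuous representatives $g=(\widehat g)^\vee$ and justify the convolution identity when $f,g$ need not be integrable. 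I would handle the identity by computing $\widehat f*\widehat g$ directly: its inverse transform is $(\widehat f)^\vee(\widehat g)^\vee$ by Fubini, because $\widehat f,\widehat g\in L^1$.

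The embedding $\mathcal{B}^t\subseteq\mathcal{B}^s$ for $t\ge s$ is immediate from $(1+|\xi|^2)^{s/2}\le(1+|\xi|^2)^{t/2}$, which gives
\[
2^{s/2}\int|\widehat g|(1+|\xi|^2)^{s/2}\le 2^{(s-t)/2}\,2^{t/2}\int|\widehat g|(1+|\xi|^2)^{t/2}.
\]

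For (2), take a multi-index $\alpha$ with $|\alpha|\le s$. The bound $|\xi^\alpha|\le|\xi|^{|\alpha|}\le(1+|\xi|^2)^{|\alpha|/2}$ shows that $\xi^\alpha\widehat g\in L^1$. Differentiation under the integral sign in $g(x)=\int\widehat g(\xi)e^{ix^\top\xi}\dd\xi$, justified by dominated convergence, then gives
\[
\partial^\alpha g=\int (i\xi)^\alpha\widehat g\,e^{ix^\top\xi}\dd\xi,
\]
which is continuous by dominated convergence and bounded by $\int|\xi^\alpha||\widehat g|$. This yields $g\in C_b^{\lfloor s\rfloor}$. Moreover $\widehat{\partial^\alpha g}=(i\xi)^\alpha\widehat g$, and $\partial^\alpha g$ is real because $g$ is real. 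The norm estimate follows from
\[
2^{(s-|\alpha|)/2}\int|\xi^\alpha||\widehat g|(1+|\xi|^2)^{(s-|\alpha|)/2}\le 2^{-|\alpha|/2}\,2^{s/2}\int|\widehat g|(1+|\xi|^2)^{s/2},
\]
which is \eqref{eq:pfest2}.
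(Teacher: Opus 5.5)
Your proof is correct and follows the same route as the paper. Completeness comes from the weighted $L^1$ limit of the Fourier transforms, monotonicity from comparing the weights, and part (2) from dominated convergence together with $|\xi^\alpha|(1+|\xi|^2)^{(s-|\alpha|)/2}\le(1+|\xi|^2)^{s/2}$.

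The one difference is that the paper defers the product inequality to \cite[Lemma~3.4]{CLLZ2023}, whereas you prove it directly from the pointwise bound $(1+|\xi|^2)^{s/2}\le 2^{s/2}(1+|\xi-\eta|^2)^{s/2}(1+|\eta|^2)^{s/2}$. This makes clear why the normalization $2^{s/2}$ gives constant $1$. You are also more careful than the paper about realness of the limit and about the meaning of $\widehat{fg}$ when $f,g$ are not integrable.
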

\begin{proof}
\noindent\text{(1)} 
The only nontrivial part in verifying that $ \mathcal{B}^s $ is a Banach space  
is the verification of its completeness.   
Suppose that $ \{ g_n \in \mathcal{B}^s \}_{n \ge 1} $ is a Cauchy sequence in $\CB^s$.  
Then the sequence 
\[
\left\{\widehat{g_n}(\xi) (1 + |\xi|^2)^{s/2} \right\}_{n \ge 1}
\]
is Cauchy in $ L^1(\mathbb{R}^d) $.  
Thus $ \widehat{g_n}(\xi) (1 + |\xi|^2)^{s/2} $ converges to some $ h(\xi) \in L^1(\mathbb{R}^d) $ by the completeness of $ L^1(\mathbb{R}^d) $.
It is straightforward to verify that $ \{ g_n \}_{n \ge 1} $ converges in $ \mathcal{B}^s $ to
\[
\left( h(\xi) (1 + |\xi|^2)^{-s/2} \right)^{\vee} \in \CB^s.
\]

To verify the product inequality required for the Banach algebra property,  
namely, for all $ g, h \in \mathcal{B}^s $,
\begin{equation}
    \label{eq:multi}
\|gh\|_{\mathcal{B}^s} \le \|g\|_{\mathcal{B}^s} \|h\|_{\mathcal{B}^s},
\end{equation}
we follow the same argument as in~\cite[Lemma~3.4]{CLLZ2023}.

The monotonicity identity follows directly from the definition.

\medskip
\noindent\text{(2)} 
Suppose $g \in \mathcal{B}^s$, then we have
$$
\left|
\widehat{g}(\xi)\, i^{|\alpha|}\xi_1^{j_1} \cdots \xi_d^{j_d} e^{i x^{\top} \xi}
\right|
\leq  \left|
\widehat{g}(\xi)\right| \cdot  |\xi|^{|\alpha|}
$$ 
and
\begin{equation}\label{eq:preLDC}
\int_{\mathbb{R}^d} 
\left|
\widehat{g}(\xi)\right| \cdot |\xi|^{|\alpha|}
\, \mathrm{d}\xi
\le 
\|g\|_{\mathcal{B}^s}
< \infty.
\end{equation} 
Hence, by the Lebesgue dominated convergence theorem, 
\begin{equation}\label{eq:paf}
\partial^{\alpha} g(x)
= 
\int_{\mathbb{R}^d} 
\widehat{g}(\xi)\, i^{|\alpha|}\xi_1^{j_1}\cdots\xi_d^{j_d} e^{i x^{\top} \xi}
\, \mathrm{d}\xi.
\end{equation}
Thus, \(\partial^\alpha g\) is continuous and bounded on
\(\mathbb{R}^d\) by~\eqref{eq:preLDC}. Finally,
estimate~\eqref{eq:pfest2} follows from
\begin{equation*}
	\begin{split}
\|\partial^{\alpha} g\|_{\mathcal{B}^{s-|\alpha|}}
=& 2^{(s-|\alpha|)/2}\int_{\mathbb{R}^d}
\left|\xi_1^{j_1}\cdots\xi_d^{j_d} \widehat{g}(\xi)\right|
(1+|\xi|^2)^{(s-|\alpha|)/2}\,\mathrm{d}\xi \\
\le &
2^{(s-|\alpha|)/2}\int_{\mathbb{R}^d}
|\xi|^{|\alpha|}
|\widehat{g}(\xi)|
(1+|\xi|^2)^{(s-|\alpha|)/2}\,\mathrm{d}\xi
\le 2^{-|\alpha|/2}\|g\|_{\mathcal{B}^{s}}. 
\end{split}
\end{equation*}
This completes the proof. 
\end{proof}

\subsection{Proof of Theorem~\ref{thm:regularitythm} and Corollary~\ref{cor:dimind}}

We begin by elaborating on Step~1 in the proof outline given in Section~\ref{sec:setup}. 
We present two approaches. 
The first is based on the Banach fixed point theorem, while the second relies on a perturbation argument, which we defer to Appendix~\ref{app}.
\begin{lemma}
\label{lem:mainlem}
Assume that condition~(A3) in Assumption~\ref{assum}
holds for some $s \ge 0$ and $\alpha > 0$,
and that $\beta \in \mathbb{R}^d$ is a constant vector.
Then for every $ f \in \mathcal{B}^s $,  
there exists a unique $ u \in \mathcal{B}^{s+2} $ satisfying
\begin{equation}
\label{eq:rewrite}
\left( \alpha+\beta \cdot \nabla - \nabla \cdot A(x) \nabla \right) u = f.
\end{equation}
Moreover, letting
\begin{equation}
    \label{eq:defL}
    L \coloneqq \frac{2}{\min\{ \alpha, m \} - \|E\|_{\CB^{s+1}}},
\end{equation}
we have the following Barron norm estimate:
\begin{equation*}
\left\| u \right\|_{\mathcal{B}^{s+2}}
\leq L \left\| f \right\|_{\mathcal{B}^s}.
\end{equation*}
\end{lemma}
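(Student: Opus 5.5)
We will set up a Banach fixed-point argument in $\mathcal{B}^{s+2}$. Write $\mathcal{L}_0 \coloneqq \alpha + \beta\cdot\nabla - \nabla\cdot M\nabla$. Its Fourier symbol is $p(\xi) = \alpha + i\beta^\top\xi + \xi^\top M\xi$. Since $\beta^\top\xi$ is real, we have
\[
|p(\xi)| \ge \alpha + \xi^\top M\xi \ge \alpha + m|\xi|^2 \ge \min\{\alpha,m\}(1+|\xi|^2).
\]
Hence $\mathcal{L}_0^{-1}g \coloneqq (\widehat{g}/p)^{\vee}$ maps $\mathcal{B}^s$ into $\mathcal{B}^{s+2}$. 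The factor $2^{(s+2)/2}/2^{s/2} = 2$ gives
\[
\|\mathcal{L}_0^{-1}g\|_{\mathcal{B}^{s+2}} \le \frac{2}{\min\{\alpha,m\}}\|g\|_{\mathcal{B}^s}.
\]
Because $p(-\xi) = \overline{p(\xi)}$, real functions are sent to real functions. Equation~\eqref{eq:rewrite} is then equivalent to
\[
u = \Phi(u) \coloneqq \mathcal{L}_0^{-1}\bigl(f + \nabla\cdot(E\nabla u)\bigr),
\]
once we check that $\nabla\cdot(E\nabla u) \in \mathcal{B}^s$ whenever $u \in \mathcal{B}^{s+2}$.

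\textbf{Estimating the perturbation.} Write $\nabla\cdot(E\nabla u) = \sum_{i,j}\partial_i(e_{ij}\partial_j u)$. By Proposition~\ref{prop:mainprop}(2), $\partial_j u \in \mathcal{B}^{s+1}$ with $\|\partial_j u\|_{\mathcal{B}^{s+1}} \le 2^{-1/2}\|u\|_{\mathcal{B}^{s+2}}$. By the Banach algebra property (1), applied in $\mathcal{B}^{s+1}$, $e_{ij}\partial_j u \in \mathcal{B}^{s+1}$ with norm at most $2^{-1/2}\|e_{ij}\|_{\mathcal{B}^{s+1}}\|u\|_{\mathcal{B}^{s+2}}$. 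Applying (2) once more gives $\|\partial_i(e_{ij}\partial_j u)\|_{\mathcal{B}^s} \le \tfrac12\|e_{ij}\|_{\mathcal{B}^{s+1}}\|u\|_{\mathcal{B}^{s+2}}$. Summing over $i,j$,
\[
\|\nabla\cdot(E\nabla u)\|_{\mathcal{B}^s} \le \tfrac12\|E\|_{\mathcal{B}^{s+1}}\|u\|_{\mathcal{B}^{s+2}}.
\]
This is why the definition~\eqref{eq:B1B2} only needs the sum of the entry norms. It follows that $\Phi$ is affine with Lipschitz constant
\[
q \coloneqq \frac{\|E\|_{\mathcal{B}^{s+1}}}{\min\{\alpha,m\}} < 1
\]
by (A3). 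Since $\mathcal{B}^{s+2}$ is Banach, the contraction mapping theorem gives a unique fixed point.

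\textbf{The estimate.} From $u = \Phi(u)$,
\[
\|u\|_{\mathcal{B}^{s+2}} \le \frac{2}{\min\{\alpha,m\}}\|f\|_{\mathcal{B}^s} + q\|u\|_{\mathcal{B}^{s+2}}.
\]
Rearranging,
\[
\|u\|_{\mathcal{B}^{s+2}} \le \frac{2}{\min\{\alpha,m\} - \|E\|_{\mathcal{B}^{s+1}}}\|f\|_{\mathcal{B}^s} = L\|f\|_{\mathcal{B}^s},
\]
which is exactly the constant in~\eqref{eq:defL}.

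\textbf{Uniqueness.} Uniqueness in $\mathcal{B}^{s+2}$ follows from the fact that any solution $u \in \mathcal{B}^{s+2}$ is a fixed point of $\Phi$. The derivatives are classical by Proposition~\ref{prop:mainprop}(2), and the Fourier transform turns $\mathcal{L}_0 u$ into $p\,\widehat{u}$, with all products lying in $L^1$. So $\mathcal{L}_0 u = f + \nabla\cdot(E\nabla u)$ can be inverted on the Fourier side.

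\textbf{Main obstacle.} The main obstacle is the bookkeeping of the normalization constants $2^{s/2}$. Each derivative gains a factor $2^{-1/2}$ and $\mathcal{L}_0^{-1}$ costs a factor $2$; these must combine to exactly $1/\min\{\alpha,m\}$ per unit of $\|E\|$. I would also verify the algebra inequality~\eqref{eq:multi} in $\mathcal{B}^{s+1}$, with $s+1$ in place of $s$, which Proposition~\ref{prop:mainprop}(1) allows. A secondary point is justifying the Fourier identity $\widehat{\partial_i g} = i\xi_i\widehat{g}$ for $g \in \mathcal{B}^{s+1}$ when $g$ is not in $L^1$. I would interpret $\widehat{g}$ via the inverse formula~\eqref{eq:paf}, which is how the paper's spaces are defined.
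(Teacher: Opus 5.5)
Your proposal is correct and is essentially the paper's proof. You use the same contraction map $\Phi(u)=\mathcal{L}_0^{-1}\bigl(f+\nabla\cdot(E\nabla u)\bigr)$ on $\mathcal{B}^{s+2}$. You also use the same symbol bound $|p(\xi)|\ge\min\{\alpha,m\}(1+|\xi|^2)$, which gives the factor $2/\min\{\alpha,m\}$, and the same estimate $\|\nabla\cdot(E\nabla u)\|_{\mathcal{B}^s}\le\frac12\|E\|_{\mathcal{B}^{s+1}}\|u\|_{\mathcal{B}^{s+2}}$, followed by the same rearrangement to obtain $L$. Your remarks on real-valuedness via $p(-\xi)=\overline{p(\xi)}$, and on uniqueness via equivalence with the fixed-point equation, fill in details the paper calls straightforward.
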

\begin{proof}
We define the mapping
\[
\Phi \colon \mathcal{B}^{s+2} \longrightarrow \mathcal{B}^{s+2}, \quad 
u \longmapsto
( \alpha+\beta \cdot \nabla - \nabla \cdot M \nabla )^{-1} 
( f + \nabla \cdot E(x) \nabla u ).
\]
It is straightforward to verify from the Fourier multiplier representation and Proposition~\ref{prop:mainprop} that \(\Phi\) is well defined; that is, \(\Phi\) maps \(\mathcal{B}^{s+2}\) into itself.
We next show that \(\Phi\) is a contraction. 

By the definition of $\Phi$, for every $u_1, u_2 \in \mathcal{B}^{s+2}$ we have
\[
\left( \alpha+\beta \cdot \nabla - \nabla \cdot M \nabla \right)\left( \Phi(u_1) - \Phi(u_2) \right)
= \nabla \cdot E(x) \nabla (u_1 - u_2).
\]
Taking the $\mathcal{B}^s$-norm on both sides and applying Proposition~\ref{prop:mainprop}, we obtain
\begin{equation}
\label{eq:contraction}
\begin{split}
&\left\| \left( \alpha+\beta \cdot \nabla - \nabla \cdot M \nabla \right)\left( \Phi(u_1) - \Phi(u_2) \right) \right\|_{\mathcal{B}^s}
= \left\| \nabla \cdot E(x) \nabla (u_1 - u_2) \right\|_{\mathcal{B}^s} \\
=& \left\| \sum_{i=1}^{d}\partial_i\left( \sum_{j=1}^{d}e_{ij}\partial_j(u_1-u_2)\right)\right\|_{\CB^s} \leq \sum_{i=1}^{d}2^{-1/2}\left\| \sum_{j=1}^{d}e_{ij}\partial_j(u_1-u_2)\right\|_{\CB^{s+1}} \\
\leq & \sum_{i,j=1}^{d}2^{-1/2}\|e_{ij}\|_{\CB^{s+1}}\|\partial_j(u_1-u_2)\|_{\CB^{s+1}} \leq \sum_{i,j=1}^{d}2^{-1}\|e_{ij}\|_{\CB^{s+1}}\|u_1-u_2\|_{\CB^{s+2}}   \\
= & \frac{\|E\|_{\CB^{s+1}}}{2}  \|u_1 - u_2\|_{\mathcal{B}^{s+2}}.
\end{split}
\end{equation}
On the other hand, we note that
\begin{equation}
\label{eq:BstoBs+2}
\begin{split}
&\left\| \left( \alpha +\beta \cdot \nabla- \nabla \cdot M \nabla \right)\left( \Phi(u_1) - \Phi(u_2) \right) \right\|_{\mathcal{B}^s} \\
=& 2^{s/2}\int_{\mathbb{R}^d}
\left| \alpha +i\xi^{\top}\beta+ \xi^{\top} M \xi \right|\cdot 
\left| \widehat{\Phi(u_1)} - \widehat{\Phi(u_2)} \right|
(1 + |\xi|^2)^{s/2} \, \mathrm{d}\xi \\
\ge & \min\{\alpha, m\}2^{s/2}
\int_{\mathbb{R}^d} (1 + |\xi|^2)
\left| \widehat{\Phi(u_1)} - \widehat{\Phi(u_2)} \right|
(1 + |\xi|^2)^{s/2} \, \mathrm{d}\xi \\
=& \frac{\min\{\alpha, m\}}{2}
\|\Phi(u_1) - \Phi(u_2)\|_{\mathcal{B}^{s+2}}.
\end{split}
\end{equation}
Combining~\eqref{eq:contraction} and~\eqref{eq:BstoBs+2}, we obtain
\[
\|\Phi(u_1) - \Phi(u_2)\|_{\mathcal{B}^{s+2}}
\le
\frac{\|E\|_{\CB^{s+1}}}{\min\{\alpha, m\}}
\|u_1 - u_2\|_{\mathcal{B}^{s+2}}.
\]
Thus, by assumption~\eqref{eq:B1B2}, the mapping $\Phi$ is a contraction.
By the Banach fixed-point theorem, there exists a unique 
$u \in \mathcal{B}^{s+2}$ such that $\Phi(u) = u$, which is equivalent to
\[
\left( \alpha +\beta \cdot \nabla- \nabla \cdot A(x) \nabla \right) u = f.
\]

For the Barron norm estimate, we use the decomposition
\[
\left( \alpha+\beta \cdot \nabla - \nabla \cdot M \nabla \right) u
= f + \nabla \cdot E(x) \nabla u.
\]
Taking the $\mathcal{B}^s$-norm on both sides and applying the same estimates as in~\eqref{eq:contraction} and~\eqref{eq:BstoBs+2}, we obtain
\[
\frac{\min\{\alpha, m\}}{2}  \|u\|_{\mathcal{B}^{s+2}}
\le \|f\|_{\mathcal{B}^s}
+ \frac{\|E\|_{\CB^{s+1}}}{2}   \|u\|_{\mathcal{B}^{s+2}}.
\]
The desired result then follows directly.
\end{proof}

We next explain the details of Step~2 and Step~3 in the proof outline as stated in Section~\ref{sec:setup}. Motivated by  \cite{LLWang2021,CLLZ, FL25}, we rewrite equation~\eqref{eq:PDE} as an integral equation of the second kind:
\[
u + \mathcal{T}(u) = (\alpha+\beta \cdot \nabla - \nabla \cdot A(x) \nabla)^{-1} f,
\]
where
\begin{equation}
	\label{eq:opT}
\mathcal{T}(u) \coloneqq (\alpha +\beta \cdot \nabla- \nabla \cdot A(x) \nabla)^{-1} (w(x)u+v(x)\cdot \nabla u).
\end{equation}

\begin{lemma}
\label{lem:compactT}
Suppose that (A1)--(A3) in Assumption~\ref{assum} hold with $s\geq 0$.  
Then
the operator
\[
\mathcal{T} \colon \mathcal{B}^{s+1} \longrightarrow \mathcal{B}^{s+1},
\]
defined in~\eqref{eq:opT}, is compact.
\end{lemma}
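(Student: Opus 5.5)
The plan is to verify the Kolmogorov--Riesz criterion in $L^1(\BR^d)$ on the Fourier side. By definition, $u\mapsto 2^{(s+1)/2}(1+|\xi|^2)^{(s+1)/2}\widehat{u}$ is an isometry from $\mathcal{B}^{s+1}$ onto (a closed subspace of) $L^1(\BR^d)$. So it suffices to show that for the unit ball $U=\{u:\|u\|_{\mathcal{B}^{s+1}}\le 1\}$ the family
\[
\mathcal{F}\coloneqq\left\{(1+|\xi|^2)^{(s+1)/2}\widehat{\mathcal{T}u} : u\in U\right\}
\]
is bounded, tight at infinity, and uniformly $L^1$-equicontinuous under translations $\xi\mapsto\xi+\eta$.

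\emph{Boundedness and tightness.} For $u\in U$ put $g\coloneqq wu+v\cdot\nabla u$. By Proposition~\ref{prop:mainprop}, $\|\partial_j u\|_{\mathcal{B}^s}\le 2^{-1/2}\|u\|_{\mathcal{B}^{s+1}}$ and $\|u\|_{\mathcal{B}^s}\le\|u\|_{\mathcal{B}^{s+1}}$. The Banach algebra property then gives $\|g\|_{\mathcal{B}^s}\le(\|w\|_{\mathcal{B}^s}+\|v\|_{\mathcal{B}^s})$. Lemma~\ref{lem:mainlem} yields $\|\mathcal{T}u\|_{\mathcal{B}^{s+2}}\le L(\|w\|_{\mathcal{B}^s}+\|v\|_{\mathcal{B}^s})\eqqcolon C_0$. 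Hence $\mathcal{F}$ is bounded in $L^1$, and the extra factor of $(1+|\xi|^2)^{1/2}$ gives
\[
\int_{|\xi|>R}(1+|\xi|^2)^{(s+1)/2}|\widehat{\mathcal{T}u}|\,\dd\xi\le (1+R^2)^{-1/2}2^{-(s+2)/2}C_0,
\]
uniformly in $u\in U$.

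\emph{Translation equicontinuity.} Here the variable leading coefficient enters, and I would isolate the constant-coefficient part as in the outline. Writing $\mathcal{T}u=(\alpha+\beta\cdot\nabla-\nabla\cdot M\nabla)^{-1}\bigl(g+\nabla\cdot E\nabla\mathcal{T}u\bigr)$, we get
\[
\widehat{\mathcal{T}u}(\xi)=\mu(\xi)\Bigl(\widehat g(\xi)-\sum_{i,j}\xi_i\,\bigl(\widehat{e_{ij}}*(\xi_j\widehat{\mathcal{T}u})\bigr)(\xi)\Bigr),\qquad \mu(\xi)=\frac{1}{\alpha+i\beta^\top\xi+\xi^\top M\xi}.
\]
By the tightness step, it suffices to control translations on a ball $|\xi|\le R$ (and the $R$-tails of the translates). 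On that ball the factors $(1+|\xi|^2)^{(s+1)/2}\mu(\xi)$ and $\mu(\xi)\xi_i(1+|\xi|^2)^{(s+1)/2}$ are uniformly continuous and bounded. The problem therefore reduces to uniform $L^1$-translation continuity of $\widehat g$ and of $\widehat{e_{ij}}*(\xi_j\widehat{\mathcal{T}u})$. For the latter,
\[
\|\tau_\eta(\widehat{e_{ij}}*h)-\widehat{e_{ij}}*h\|_{L^1}\le\|\tau_\eta\widehat{e_{ij}}-\widehat{e_{ij}}\|_{L^1}\,\|h\|_{L^1},
\]
with $h=\xi_j\widehat{\mathcal{T}u}$. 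Here $\|h\|_{L^1}\lesssim C_0$ uniformly, and $\|\tau_\eta\widehat{e_{ij}}-\widehat{e_{ij}}\|_{L^1}\to0$ since $\widehat{e_{ij}}$ is a \emph{fixed} $L^1$ function. Likewise $\widehat g=\widehat w*\widehat u+\sum_k\widehat{v_k}*(i\xi_k\widehat u)$ is a convolution of fixed $L^1$ functions with uniformly $L^1$-bounded ones, so it is uniformly translation-continuous by the same inequality.

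The main obstacle I expect is bookkeeping in this last step. The weights $(1+|\xi|^2)^{(s+1)/2}$ do not commute with translation, which is why I localize to $|\xi|\le R$, using the tail bound from the $\mathcal{B}^{s+2}$ estimate to absorb the complement. One also needs only unweighted $L^1$ bounds on the convolution factors inside the ball, and these follow from $\|\cdot\|_{\mathcal{B}^0}\le\|\cdot\|_{\mathcal{B}^s}$. With boundedness, tightness, and equicontinuity established, Kolmogorov--Riesz gives relative compactness of $\mathcal{F}$ in $L^1$. Hence $\mathcal{T}(U)$ is relatively compact in $\mathcal{B}^{s+1}$, and the linear operator $\mathcal{T}$ is compact.
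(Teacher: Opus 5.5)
Your proposal is correct and follows essentially the same route as the paper. It applies Kolmogorov--Riesz on the weighted Fourier side, takes tails from the $\mathcal{B}^{s+2}$ gain of Lemma~\ref{lem:mainlem}, and gets equicontinuity by splitting off the constant-coefficient inverse $(\alpha+\beta\cdot\nabla-\nabla\cdot M\nabla)^{-1}$; your fixed-point identity is exactly the paper's decomposition $F=G+(F-G)$ in \eqref{eq:AxM}. Your use of $\|(\tau_\eta a-a)*h\|_{L^1}\le\|\tau_\eta a-a\|_{L^1}\|h\|_{L^1}$, keeping the divergence form $\xi_i\,(\widehat{e_{ij}}*(\xi_j\widehat{F}))$, is a tidier version of the paper's $C_c^\infty$ approximations of $\widehat{e_{ij}}$, $\widehat{\partial_i e_{ij}}$, $\widehat{w}$, $\widehat{v_i}$, and it avoids handling $\partial_{ij}F$ and $\partial_i e_{ij}$ separately.
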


The well-posedness of $\mathcal{T}$ is guaranteed by Proposition~\ref{prop:mainprop} and Lemma~\ref{lem:mainlem}.
To prove that $\mathcal{T}$ is compact, it suffices to show that the image of the closed unit ball in $ \mathcal{B}^{s+1} $,
\begin{equation}\label{TX}
\left\{ \mathcal{T}(u) : \| u \|_{\mathcal{B}^{s+1}} \leq 1 \right\},
\end{equation}
is relatively compact in $ \mathcal{B}^{s+1} $. Since $ \mathcal{B}^{s+1} $ is complete, relative compactness is equivalent to total boundedness. Therefore, it suffices to prove the total boundedness of the set
\begin{equation}\label{funct_class_F}
\mathcal{F} \coloneqq \left\{ \widehat{\mathcal{T}(u)}(\xi)  (1 + |\xi|^2)^{(s+1)/2} : \| u \|_{\mathcal{B}^{s+1}} \leq 1 \right\} \subseteq L^1(\mathbb{R}^d),
\end{equation}
where the $ \mathcal{B}^s $-norm is interpreted in terms of the standard $ L^1 $-norm,  
and we omit the normalization constant $ 2^{(s+1)/2} $ for simplicity,  
as scaling does not affect total boundedness.
The following Kolmogorov--Riesz theorem will be instrumental in establishing total boundedness.

\begin{theorem}[Kolmogorov--Riesz Theorem {\cite[Theorem 5]{Hanche2010}}]\label{thm:KRthm}
Let $ 1\leq p < \infty $. A subset $ \mathcal{F} \subseteq L^p(\mathbb{R}^d) $ is totally bounded if and only if the following three conditions hold:
\begin{enumerate}[(1)]
  \item $ \mathcal{F} $ is bounded in $ L^p(\mathbb{R}^d) $. That is,
\[
\sup_{f \in \mathcal{F}} \| f \|_{L^p(\mathbb{R}^d)} < \infty;
\]
  \item For every $ \varepsilon > 0 $, there exists $ R > 0 $ such that
  \[
  \int_{|x| > R} |f(x)|^p \dd x < \varepsilon^p, \quad \text{for all } f \in \mathcal{F};
  \]
  \item $ \mathcal{F} $ is uniformly equicontinuous. That is, for every $ \varepsilon > 0 $, there exists $ \delta > 0 $ such that
\[
\int_{\mathbb{R}^d} |f(x + y) - f(x)|^p \, \mathrm{d}x < \varepsilon^p, \quad \text{for all } f \in \mathcal{F} \text{ and all } | y | < \delta.
\]
\end{enumerate}
\end{theorem}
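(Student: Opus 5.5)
The statement is the classical Kolmogorov--Riesz theorem, and I would prove the two implications separately. Throughout, write $\tau_y f(x) \coloneqq f(x+y)$.

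\emph{Necessity.} Suppose $\mathcal{F}$ is totally bounded and fix $\varepsilon>0$. Pick a finite $\varepsilon/3$-net $\{f_1,\dots,f_N\}\subseteq L^p(\mathbb{R}^d)$ for $\mathcal{F}$. Boundedness (1) is then immediate. Each single $f_k\in L^p$ satisfies two facts:
\begin{itemize}
\item by dominated convergence, $\int_{|x|>R}|f_k|^p\dd x\to 0$ as $R\to\infty$;
\item by continuity of translation in $L^p$ (approximate $f_k$ by a compactly supported continuous function), $\|\tau_y f_k-f_k\|_{L^p}\to0$ as $y\to0$.
\end{itemize}
Since there are finitely many $f_k$, I choose $R$ and $\delta$ working for all of them at tolerance $\varepsilon/3$. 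For arbitrary $f\in\mathcal{F}$, pick $f_k$ with $\|f-f_k\|_{L^p}<\varepsilon/3$. The triangle inequality gives $\|f\|_{L^p(|x|>R)}<2\varepsilon/3$ and $\|\tau_yf-f\|_{L^p}\le 2\|f-f_k\|_{L^p}+\|\tau_yf_k-f_k\|_{L^p}<\varepsilon$, which yields (2) and (3).

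\emph{Sufficiency.} This is the substantive direction. Given $\varepsilon>0$, use (2) to choose $R$ with $\|f\|_{L^p(|x|>R)}<\varepsilon$ for all $f\in\mathcal{F}$. Use (3) to choose $\delta$ with $\|\tau_yf-f\|_{L^p}<\varepsilon$ for all $|y|<\delta$ and all $f\in\mathcal{F}$. Cover the cube $[-R,R]^d$ by finitely many disjoint half-open cubes $Q_1,\dots,Q_N$ of side $\rho$, with $\rho\sqrt d<\delta$. Define the finite-rank averaging operator
\[
Pf\coloneqq\sum_{j=1}^N\Big(\frac1{|Q_j|}\int_{Q_j}f\Big)\mathbf 1_{Q_j}.
\]

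The key estimate, which I expect to be the main obstacle, is a uniform bound for $\|f-Pf\|_{L^p(\cup Q_j)}$. For $x\in Q_j$, write $f(x)-Pf(x)=\frac1{|Q_j|}\int_{Q_j}(f(x)-f(z))\dd z$. Applying Jensen's (or H\"older's) inequality gives
\[
|f(x)-Pf(x)|^p\le \frac1{|Q_j|}\int_{Q_j}|f(x)-f(z)|^p\dd z\le \frac1{|Q_j|}\int_{|y|<\delta}|f(x)-f(x+y)|^p\dd y,
\]
where the second step substitutes $z=x+y$ and uses $|y|\le\rho\sqrt d<\delta$. Summing over $j$, integrating in $x$, and using Fubini then yields
\[
\|f-Pf\|^p_{L^p(\cup Q_j)}\le \frac{|B_\delta|}{\rho^d}\sup_{|y|<\delta}\|\tau_yf-f\|^p_{L^p}.
\]
The factor $|B_\delta|/\rho^d$ is bounded by a dimensional constant $c_d$ once $\rho$ is tied to $\delta$ (say $\rho=\delta/(2\sqrt d)$). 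Shrinking $\delta$ at the outset to absorb this constant, I get $\|f-Pf\|_{L^p(\cup Q_j)}<\varepsilon$ uniformly in $f\in\mathcal{F}$. Combined with the tail bound, this gives $\|f-Pf\|_{L^p(\mathbb{R}^d)}<2\varepsilon$.

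Finally, by H\"older's inequality each coefficient satisfies $|\frac1{|Q_j|}\int_{Q_j}f|\le |Q_j|^{-1/p}\|f\|_{L^p}$. By (1), the set $P(\mathcal{F})$ is therefore a bounded subset of the finite-dimensional space spanned by $\mathbf 1_{Q_1},\dots,\mathbf 1_{Q_N}$, hence totally bounded. An $\varepsilon$-net for $P(\mathcal{F})$ is thus a $3\varepsilon$-net for $\mathcal{F}$. Since $\varepsilon$ is arbitrary, $\mathcal{F}$ is totally bounded.
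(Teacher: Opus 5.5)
Your proof is correct and complete. The paper does not prove this theorem: it quotes it from \cite{Hanche2010} and uses only the sufficiency direction, with $p=1$, to show that the set $\mathcal{F}$ of weighted Fourier transforms in the proof of Lemma~\ref{lem:compactT} is totally bounded. So there is no in-paper argument to compare against.

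Your sufficiency argument is essentially the standard proof, as in the cited reference. It combines four ingredients:
\begin{itemize}
\item the finite-rank cube-averaging operator $P$;
\item Jensen plus Tonelli to bound $\|f-Pf\|_{L^p}$ on the cubes by the translation modulus, up to the dimensional factor $|B_\delta|/\rho^d$;
\item the tail condition off the cubes, where $Pf=0$;
\item total boundedness of the bounded set $P(\mathcal F)$ in a finite-dimensional space.
\end{itemize}
The cited source packages the last step as a general metric-space lemma, which your triangle-inequality net argument simply unpacks. This route avoids the other common proof, which mollifies and applies Arzel\`a--Ascoli on a ball.

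Your necessity direction (the $\varepsilon/3$-net argument, using continuity of translation for $p<\infty$) is also correct, though the paper never needs it.

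One wording point only. Once $\rho=\delta/(2\sqrt d)$, the constant $c_d=|B_1|(2\sqrt d)^d$ does not depend on $\delta$. Shrinking $\delta$ alone therefore does not absorb it; you absorb it by invoking (3) with tolerance $\varepsilon c_d^{-1/p}$, which is clearly what you intend.
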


\begin{proof}[\textbf{Proof of Lemma~\ref{lem:compactT}}]
We verify the three conditions in Theorem~\ref{thm:KRthm} to establish the total boundedness of the set~\eqref{funct_class_F}.
Throughout the proof, we adopt the convention that 
\(\|v(x)\|_{\mathcal{B}^s}\) is defined as in~\eqref{eq:newv}.

\medskip
\noindent\textbf{Verification of condition (1).} 
Since $ \left( \alpha+\beta \cdot \nabla - \nabla \cdot A(x) \nabla \right) \mathcal{T}(u) = w(x) u+v(x)\cdot \nabla u $,  
by Lemma~\ref{lem:mainlem} and Proposition~\ref{prop:mainprop}, we have the following estimate for any $ \| u \|_{\mathcal{B}^{s+1}} \leq 1 $:
\begin{align*}
&\int_{\mathbb{R}^d} \left| \widehat{\mathcal{T}(u)}(\xi) \right| (1 + | \xi |^2)^{(s+1)/2} \, \mathrm{d}\xi
\leq \|\mathcal{T}(u)\|_{\CB^{s+2}} \\
\leq& L\left\| wu +v\cdot \nabla u\right\|_{\mathcal{B}^s} 
\leq  L (\| w \|_{\mathcal{B}^s} \| u \|_{\mathcal{B}^s}+\|v\|_{\CB^s}\|u\|_{\CB^{s+1}}) \\
\leq & L(\| w \|_{\mathcal{B}^s}+\|v\|_{\CB^s}).
\end{align*}
Thus, $ \mathcal{F} $ is bounded in $ L^1(\mathbb{R}^d) $.

\medskip
\noindent\textbf{Verification of condition (2).} 
For any $ \varepsilon > 0 $, there exists $ R > 0 $ such that
\[
L^{-1}(1+|\xi|^2)^{1/2} > \frac{1}{\varepsilon} \quad \text{for all } | \xi | > R.
\]
Then, for any $ u \in \mathcal{B}^{s+1} $ with $ \| u \|_{\mathcal{B}^{s+1}} \leq 1 $, it follows from Lemma~\ref{lem:mainlem} and Proposition~\ref{prop:mainprop} that
\begin{align*}
&\frac{1}{\varepsilon}\int_{| \xi | > R} \left| \widehat{\mathcal{T}(u)}(\xi) \right| (1 + | \xi |^2)^{(s+1)/2} \, \mathrm{d}\xi
\leq 
L^{-1}  \int_{| \xi | > R}  \left| \widehat{\mathcal{T}(u)}(\xi) \right| (1 + | \xi |^2)^{(s+2)/2} \, \mathrm{d}\xi \\
\leq&  L^{-1}\|\mathcal{T}(u)\|_{\CB^{s+2}} 
\leq \left\| w u +v\cdot \nabla u\right\|_{\mathcal{B}^s}
\leq \| w \|_{\mathcal{B}^s}+\|v\|_{\CB^s}.
\end{align*}
Thus, we obtain
\[
\int_{| \xi | > R} \left| \widehat{\mathcal{T}(u)}(\xi) \right| (1 + | \xi |^2)^{(s+1)/2} \, \mathrm{d}\xi
\leq \varepsilon (\| w \|_{\mathcal{B}^s}+\|v\|_{\CB^s}).
\]

\medskip
\noindent\textbf{Verification of condition (3).} 
Let $ F, G \in \mathcal{B}^{s+2} $ and $ H \in \mathcal{B}^s $ satisfy
\begin{equation}\label{eq:AxM}
\begin{split}
( \alpha+\beta \cdot \nabla - \nabla \cdot A(x) \nabla ) F &= w u + v \cdot \nabla u, \\
( \alpha+\beta \cdot \nabla - \nabla \cdot M \nabla ) G &= w u + v \cdot \nabla u, \\
( \alpha+\beta \cdot \nabla - \nabla \cdot M \nabla )(F - G) &= \nabla \cdot E(x) \nabla F =H.
\end{split}
\end{equation}
The existence of $ F $ is guaranteed by Lemma~\ref{lem:mainlem}, 
while the existence of $ G $ follows directly from taking the Fourier transform of the constant-coefficient equation.  
Taking Fourier transforms of the last two equations yields
\begin{equation}\label{eq:FGH}
\widehat{G}(\xi)
= \frac{\widehat{w u}(\xi) + \widehat{v \cdot \nabla u}(\xi)}{\alpha +i\xi^{\top}\beta+ \xi^{\top} M \xi},
\quad
\widehat{F - G}(\xi)
= \frac{\widehat{H}(\xi)}{\alpha +i\xi^{\top}\beta+ \xi^{\top} M \xi}.
\end{equation}

For a given $ \varepsilon > 0 $,  
we introduce the following constants and auxiliary functions used in the proof.
\begin{itemize}
\item By condition~(2), there exists $ R > 0 $ such that for all $ u \in \mathcal{B}^{s+1} $ with $ \|u\|_{\mathcal{B}^{s+1}} \le 1 $,
\begin{equation}
\label{eq:xiR}
\int_{|\xi| > R} 
|\widehat{F}(\xi)|(1 + |\xi|^2)^{(s+1)/2}
\, \mathrm{d}\xi
< \varepsilon.
\end{equation}
	\item 
Define
\[
C_1 \coloneqq \sup_{|\xi| \le 2R} 
\left| \frac{(1 + |\xi|^2)^{(s+1)/2}}{\alpha +i\xi^{\top}\beta+ \xi^{\top} M \xi}\right| ,
\quad
C_2 \coloneqq \sup_{|\xi| \le 3R} 
\left| L(\alpha +i\xi^{\top}\beta+ \xi^{\top} M \xi)\right|.
\]
Both are finite by the smoothness of the functions involved.  
Moreover, by uniform continuity, for every $ \varepsilon > 0 $ there exists $ 0 < \delta_1 < R $ such that
\[
\left|
\frac{(1 + |\xi|^2)^{(s+1)/2}}{\alpha +i\xi^{\top}\beta+ \xi^{\top} M \xi}
-
\frac{(1 + |\xi'|^2)^{(s+1)/2}}{\alpha +i\xi'^{\top}\beta+ \xi'^{\top} M \xi'}
\right|
< \varepsilon,
\quad
\text{for all } | \xi |, | \xi' | \le 3R
\text{ with } | \xi - \xi' | < \delta_1.
\]

\item For every $ 1 \le i,j \le d $, since $ \widehat{e_{ij}} \in L^1(\mathbb{R}^d) $,  
there exists $ \varphi_{ij} \in C_c^{\infty}(\mathbb{R}^d) $ such that~\cite[Corollary~4.23]{Brezis2011}
\begin{equation}
\label{eq:varphi}
\|\widehat{e_{ij}} - \varphi_{ij}\|_{L^1(\mathbb{R}^d)}
< \frac{\varepsilon}{2C_1Ld^2}.
\end{equation}
Moreover, there exists $ 0 < \delta_2 < R $ such that, for every $ 1 \le i,j \le d $,
\[
|\varphi_{ij}(\xi) - \varphi_{ij}(\xi')|
< \frac{\varepsilon}{C_1|B(0,2R)|Ld^2},
\quad
\text{for all }  \xi,  \xi' \in  \mathbb{R}^d
\text{ with } | \xi - \xi' | < \delta_2.
\]
Here $ |B(0,2R)| $ denotes the Lebesgue measure of the ball in $ \mathbb{R}^d $ centered at the origin with radius $ 2R $.
\item For every $ 1 \le i,j \le d $, since $ \widehat{\partial_i e_{ij}} \in L^1(\mathbb{R}^d) $ by Proposition~\ref{prop:mainprop},  
there exists $ \psi_{ij} \in C_c^{\infty}(\mathbb{R}^d) $ such that
\begin{equation}
\label{eq:psi}
\|\widehat{\partial_i e_{ij}} - \psi_{ij}\|_{L^1(\mathbb{R}^d)}
< \frac{\varepsilon}{2C_1Ld^2}.
\end{equation}
Moreover, there exists $ 0 < \delta_3 < R $ such that, for every $ 1 \le i,j \le d $,
\[
|\psi_{ij}(\xi) - \psi_{ij}(\xi')|
< \frac{\varepsilon}{C_1|B(0,2R)|Ld^2},
\quad
\text{for all } \xi,  \xi' \in  \mathbb{R}^d
\text{ with } | \xi - \xi' | < \delta_3.
\]

\item Since $ \widehat{w} \in L^1(\mathbb{R}^d) $,  
there exists $ \phi \in C_c^{\infty}(\mathbb{R}^d) $ such that
\begin{equation}
\label{eq:phi}
\|\widehat{w} - \phi\|_{L^1(\mathbb{R}^d)} 
< \frac{\varepsilon}{2C_1}.
\end{equation}
Moreover, there exists $ 0 < \delta_4 < R $ such that
\[
|\phi(\xi) - \phi(\xi')|
< \frac{\varepsilon}{C_1|B(0,2R)|},
\quad
\text{for all } \xi,  \xi' \in  \mathbb{R}^d
\text{ with } | \xi - \xi' | < \delta_4.
\]

\item For every $ 1 \le i\le d $, since $ \widehat{v_i} \in L^1(\mathbb{R}^d) $,  
there exists $ \sigma_{i} \in C_c^{\infty}(\mathbb{R}^d) $ such that
\begin{equation}
\label{eq:sigma}
\|\widehat{v_{i}} - \sigma_{i}\|_{L^1(\mathbb{R}^d)}
< \frac{\varepsilon}{2C_1d}.
\end{equation}
Moreover, there exists $ 0 < \delta_5 < R $ such that, for every $ 1 \le i \le d $,
\[
|\sigma_{i}(\xi) - \sigma_{i}(\xi')|
< \frac{\varepsilon}{C_1|B(0,2R)|d},
\quad
\text{for all } \xi,  \xi' \in  \mathbb{R}^d
\text{ with } | \xi - \xi' | < \delta_5.
\]
\end{itemize}

We now begin the verification of condition~(3).  
For every $ u \in \mathcal{B}^{s+1} $ with $ \|u\|_{\mathcal{B}^{s+1}} \le 1 $,  
if $ |y| < \min\{\delta_1, \delta_2, \delta_3, \delta_4, \delta_5\}<R $,  
we compute
\begin{align}
&  \int_{\mathbb{R}^d} \left| \widehat{\mathcal{T}(u)}(\xi + y) (1 + | \xi + y |^2)^{(s+1)/2} 
- \widehat{\mathcal{T}(u)}(\xi) (1 + | \xi |^2)^{(s+1)/2} \right|\, \mathrm{d}\xi \notag \\
\leq& 
\int_{| \xi | > 2R} \left|\widehat{F}(\xi + y) (1 + | \xi + y |^2)^{(s+1)/2}\right|\, \mathrm{d}\xi 
+ \int_{| \xi | > 2R} \left|\widehat{F}(\xi) (1 + | \xi |^2)^{(s+1)/2}\right|\, \mathrm{d}\xi  \notag\\
& \quad +  \int_{| \xi | \leq 2R} \left| \widehat{F}(\xi + y) (1 + | \xi + y |^2)^{(s+1)/2} - \widehat{F}(\xi) (1 + | \xi |^2)^{(s+1)/2}   \right|\, \mathrm{d}\xi \notag\\
\leq& 
2 \int_{| \xi | > R} \left| \widehat{F}(\xi) (1 + | \xi |^2)^{(s+1)/2}\right| \, \mathrm{d}\xi  \notag\\
&\quad + \int_{| \xi | \leq 2R} \left| \widehat{F}(\xi + y) (1 + | \xi + y |^2)^{(s+1)/2} - \widehat{F}(\xi) (1 + | \xi |^2)^{(s+1)/2} \right|\, \mathrm{d}\xi \notag\\
< &
2\varepsilon  + \int_{| \xi | \leq 2R} \left| \widehat{F}(\xi + y) (1 + | \xi + y |^2)^{(s+1)/2} - \widehat{F}(\xi) (1 + | \xi |^2)^{(s+1)/2} \right|\, \mathrm{d}\xi. \label{eq:last}
\end{align}
To estimate the last term in~\eqref{eq:last}, we decompose it as
\begin{align}
&\int_{| \xi | \leq 2R} \left| \widehat{F}(\xi + y) (1 + | \xi + y |^2)^{(s+1)/2} - \widehat{F}(\xi) (1 + | \xi |^2)^{(s+1)/2} \right|\, \mathrm{d}\xi \notag \\
\leq & \int_{| \xi | \leq 2R} \left| \widehat{F-G}(\xi + y) (1 + | \xi + y |^2)^{(s+1)/2} - \widehat{F-G}(\xi) (1 + | \xi |^2)^{(s+1)/2} \right|\, \mathrm{d}\xi \tag{E1}\label{eq:E1} \\
&\quad + \int_{| \xi | \leq 2R} \left| \widehat{G}(\xi + y) (1 + | \xi + y |^2)^{(s+1)/2} - \widehat{G}(\xi) (1 + | \xi |^2)^{(s+1)/2} \right|\, \mathrm{d}\xi. \tag{E2}\label{eq:E2}
\end{align}

\noindent\textbf{Estimation of \eqref{eq:E1}.} 
By~\eqref{eq:FGH}, we have
\begin{align}
	&\int_{| \xi | \leq 2R} \left| \widehat{F-G}(\xi + y) (1 + | \xi + y |^2)^{(s+1)/2} - \widehat{F-G}(\xi) (1 + | \xi |^2)^{(s+1)/2} \right|\, \mathrm{d}\xi \notag \\
	=& \int_{| \xi | \leq 2R} \left| \frac{(1 + | \xi + y |^2)^{(s+1)/2}}{\alpha+i(\xi+y)^{\top}\beta+(\xi+y)^{\top}M(\xi+y)} \widehat{H}(\xi + y) -  \frac{(1 + | \xi |^2)^{(s+1)/2}}{\alpha+i\xi^{\top}\beta+\xi^{\top}M\xi} \widehat{H}(\xi) \right|\, \mathrm{d}\xi \notag \\ 
	\leq& \int_{| \xi | \leq 2R} \left| \frac{(1 + | \xi + y |^2)^{(s+1)/2}}{\alpha+i(\xi+y)^{\top}\beta+(\xi+y)^{\top}M(\xi+y)}  -  \frac{(1 + | \xi |^2)^{(s+1)/2}}{\alpha+i\xi^{\top}\beta+\xi^{\top}M\xi}  \right|\cdot  \left|\widehat{H}(\xi + y)\right|\, \mathrm{d}\xi \notag \\ 
	&\quad + \int_{| \xi | \leq 2R} \left|  \frac{(1 + | \xi |^2)^{(s+1)/2}}{\alpha+i\xi^{\top}\beta+\xi^{\top}M\xi}  \right| \cdot \left|\widehat{H}(\xi + y)-\widehat{H}(\xi )\right|\, \mathrm{d}\xi \notag \\
	\leq& \underbrace{\varepsilon  \int_{| \xi | \leq 3R}  \left|\widehat{H}(\xi )\right|\, \mathrm{d}\xi }_{(\text{E1-A})} + \underbrace{C_1  \int_{| \xi | \leq 2R}  \left|\widehat{H}(\xi + y)-\widehat{H}(\xi )\right|\, \mathrm{d}\xi}_{(\text{E1-B})}. \label{eq:34}
\end{align}

\noindent\textbf{Estimation of (\text{E1-A}).} 
By~\eqref{eq:FGH} and Lemma~\ref{lem:mainlem}, we compute
\begin{align}
&\varepsilon  \int_{| \xi | \leq 3R}  \left|\widehat{H}(\xi )\right|\, \mathrm{d}\xi =\varepsilon  \int_{| \xi | \leq 3R} \left| \alpha+i\xi^{\top}\beta+\xi^{\top}M\xi\right|\cdot  \left|\widehat{F-G}(\xi )\right|\, \mathrm{d}\xi 	\notag \\
\leq & \varepsilon \int_{| \xi | \leq 3R} \left| \alpha+i\xi^{\top}\beta+\xi^{\top}M\xi\right|\cdot \left|\widehat{F}(\xi )\right|\, \mathrm{d}\xi + \varepsilon  \int_{| \xi | \leq 3R} \left| \alpha+i\xi^{\top}\beta+\xi^{\top}M\xi\right|\cdot \left|\widehat{G}(\xi )\right|\, \mathrm{d}\xi  	\notag \\
=&  \varepsilon \int_{| \xi | \leq 3R}L^{-1} \left|L(\alpha+i\xi^{\top}\beta+ \xi^{\top}M\xi)\right| \cdot  \left|\widehat{F}(\xi )\right|\, \mathrm{d}\xi + \varepsilon \int_{| \xi | \leq 3R} \left|\widehat{wu}(\xi )+ \widehat{v\cdot \nabla u}(\xi) \right|\, \mathrm{d}\xi  	\notag \\
\leq& \varepsilon C_2 \int_{\BR^d}L^{-1} \left|\widehat{F}(\xi )\right|\, \mathrm{d}\xi + \varepsilon \int_{\BR^d} \left|\widehat{wu}(\xi )+\widehat{v\cdot \nabla u}(\xi)\right|\, \mathrm{d}\xi  	\notag \\
\leq& \varepsilon C_2 \|wu+v\cdot \nabla u \|_{\CB^s}+\varepsilon \|wu+v\cdot \nabla u\|_{\CB^0}
\leq (\varepsilon C_2+\varepsilon) (\|w\|_{\CB^s}+\|v\|_{\CB^s}). \label{eq:estE1-1}
\end{align}

\noindent\textbf{Estimation of (\text{E1-B}).} 
We compute
\begin{align}
	&C_1 \int_{| \xi | \leq 2R}  \left|\widehat{H}(\xi + y)-\widehat{H}(\xi )\right|\, \mathrm{d}\xi \notag \\
	\leq& \sum_{i,j=1}^{d}C_1  \int_{| \xi | \leq 2R} \left| \widehat{e_{ij}\partial_{ij}F}(\xi+y)-\widehat{ e_{ij}\partial_{ij}F}(\xi)\right|\, \mathrm{d}\xi \notag \\
	&\quad  + \sum_{i,j=1}^{d}C_1 \int_{| \xi | \leq 2R} \left| \widehat{\partial_i e_{ij}\partial_{j}F}(\xi+y)-\widehat{\partial_i e_{ij}\partial_{j}F}(\xi)\right|\, \mathrm{d}\xi \notag \\
	\leq& \sum_{i,j=1}^{d}C_1 \int_{| \xi | \leq 2R}\int_{\BR^d} \left| \widehat{\partial_{ij}F}(\eta)\right|\cdot  \left| \widehat{e_{ij}}(\xi+y-\eta)-\widehat{e_{ij}}(\xi-\eta)\right|\, \mathrm{d}\eta\mathrm{d}\xi \notag \\
	&\quad + \sum_{i,j=1}^{d}C_1 \int_{| \xi | \leq 2R}\int_{\BR^d} \left| \widehat{\partial_{j}F}(\eta)\right|\cdot  \left| \widehat{\partial_i e_{ij}}(\xi+y-\eta)-\widehat{\partial_i e_{ij}}(\xi-\eta)\right|\, \mathrm{d}\eta\mathrm{d}\xi \notag \\
	\leq& \sum_{i,j=1}^{d}C_1 \int_{| \xi | \leq 2R}\int_{\BR^d} \left| \widehat{\partial_{ij}F}(\eta)\right|\cdot \left| \widehat{e_{ij}}(\xi+y-\eta)- \varphi_{ij}(\xi+y-\eta)\right|\, \mathrm{d}\eta\mathrm{d}\xi \notag \\
	&\quad + \sum_{i,j=1}^{d} C_1 \int_{| \xi | \leq 2R}\int_{\BR^d} \left| \widehat{\partial_{ij}F}(\eta)\right|\cdot \left| \widehat{e_{ij}}(\xi-\eta)- \varphi_{ij}(\xi-\eta)\right|\, \mathrm{d}\eta\mathrm{d}\xi \notag \\
	&\quad + \sum_{i,j=1}^{d} C_1 \int_{| \xi | \leq 2R}\int_{\BR^d} \left| \widehat{\partial_{ij}F}(\eta)\right|\cdot \left| \varphi_{ij}(\xi+y-\eta)- \varphi_{ij}(\xi-\eta)\right|\, \mathrm{d}\eta\mathrm{d}\xi \notag \\
	&\quad + \sum_{i,j=1}^{d}C_1 \int_{| \xi | \leq 2R}\int_{\BR^d} \left| \widehat{\partial_{j}F}(\eta)\right|\cdot \left| \widehat{\partial_i e_{ij}}(\xi+y-\eta)- \psi_{ij}(\xi+y-\eta)\right|\, \mathrm{d}\eta\mathrm{d}\xi \notag \\
	&\quad + \sum_{i,j=1}^{d} C_1 \int_{| \xi | \leq 2R}\int_{\BR^d} \left| \widehat{\partial_{j}F}(\eta)\right|\cdot \left| \widehat{\partial_i e_{ij}}(\xi-\eta)- \psi_{ij}(\xi-\eta)\right|\, \mathrm{d}\eta\mathrm{d}\xi \notag \\
	&\quad + \sum_{i,j=1}^{d} C_1 \int_{| \xi | \leq 2R}\int_{\BR^d} \left| \widehat{\partial_{j}F}(\eta)\right|\cdot \left| \psi_{ij}(\xi+y-\eta)- \psi_{ij}(\xi-\eta)\right|\, \mathrm{d}\eta\mathrm{d}\xi \notag \\
	\leq& \sum_{i,j=1}^{d}2C_1 \int_{\BR^d}\left| \widehat{\partial_{ij}F}(\eta)\right| \|\widehat{e_{ij}}-\varphi_{ij}\|_{L^1(\BR^d)}\, \mathrm{d}\eta \notag \\
	&\quad + \sum_{i,j=1}^{d}\frac{\varepsilon}{|B(0,2R)|Ld^2}  \int_{| \xi | \leq 2R}\int_{\BR^d} \left| \widehat{\partial_{ij}F}(\eta)\right|\, \mathrm{d}\eta\mathrm{d}\xi  \notag \\
	&\quad + \sum_{i,j=1}^{d}2C_1 \int_{\BR^d}\left| \widehat{\partial_{j}F}(\eta)\right| \|\widehat{\partial_i e_{ij}}-\psi_{ij}\|_{L^1(\BR^d)}\, \mathrm{d}\eta  \notag \\
	&\quad+ \sum_{i,j=1}^{d}\frac{\varepsilon}{|B(0,2R)|Ld^2}  \int_{| \xi | \leq 2R}\int_{\BR^d} \left| \widehat{\partial_{j}F}(\eta)\right|\, \mathrm{d}\eta\mathrm{d}\xi  \notag \\
	\leq& \sum_{i,j=1}^{d} \frac{2\varepsilon}{Ld^2} \int_{\BR^d}\left| \widehat{\partial_{ij}F}(\eta)\right|\, \mathrm{d}\eta
	+\sum_{i,j=1}^{d} \frac{2\varepsilon}{Ld^2} \int_{\BR^d}\left| \widehat{\partial_{j}F}(\eta)\right|\, \mathrm{d}\eta \notag \\
	\leq & \sum_{i,j=1}^{d} \frac{2 \varepsilon}{Ld^2} \|\partial_{ij}F\|_{\CB^s}
	+\sum_{i,j=1}^{d} \frac{2\varepsilon}{Ld^2} \|\partial_{j}F\|_{\CB^s}  \notag \\
	\leq& 4\varepsilon \|wu+v\cdot \nabla u \|_{\CB^s} \leq 4\varepsilon (\|w\|_{\CB^s}+\|v\|_{\CB^s})
	\label{eq:estE1-2}
\end{align}
Combining~\eqref{eq:estE1-1} and~\eqref{eq:estE1-2}, we obtain
\begin{equation}\label{eq:resultE1}
\begin{split}
	&\int_{| \xi | \leq 2R} \left| \widehat{F-G}(\xi + y) (1 + | \xi + y |^2)^{(s+1)/2} - \widehat{F-G}(\xi) (1 + | \xi |^2)^{(s+1)/2} \right|\, \mathrm{d}\xi \\
	\leq& ( 5\varepsilon+C_2\varepsilon) (\|w\|_{\CB^s}+\|v\|_{\CB^s}).
\end{split}
\end{equation}

\noindent\textbf{Estimation of \eqref{eq:E2}.}
By~\eqref{eq:FGH}, we obtain
\begin{align}
	 &\int_{| \xi | \leq 2R} \left| \widehat{G}(\xi + y) (1 + | \xi + y |^2)^{(s+1)/2} - \widehat{G}(\xi) (1 + | \xi |^2)^{(s+1)/2} \right|\, \mathrm{d}\xi \notag \\
	 = &  \int_{| \xi | \leq 2R} \left| \frac{(1 + | \xi + y |^2)^{(s+1)/2}}{\alpha+i(\xi+y)^{\top}\beta+ (\xi+y)^{\top}M(\xi+y)}(\widehat{wu}(\xi+y)+\widehat{v\cdot \nabla u}(\xi+y))\right.  \notag \\ 
	 &\qquad\qquad\qquad\qquad \left. -\frac{(1 + | \xi |^2)^{(s+1)/2}}{\alpha+i\xi^{\top}\beta+\xi^{\top}M\xi}(\widehat{wu}(\xi)+\widehat{v\cdot \nabla u}(\xi))\right|  \, \mathrm{d}\xi \notag \\  
	 \leq& \int_{| \xi | \leq 2R} \left| \frac{(1 + | \xi + y |^2)^{(s+1)/2}}{\alpha+i(\xi+y)^{\top}\beta+(\xi+y)^{\top}M(\xi+y)}  -  \frac{(1 + | \xi |^2)^{(s+1)/2}}{\alpha+i\xi^{\top}\beta+\xi^{\top}M\xi}  \right| \notag \\ 
	 &\qquad\qquad\qquad\qquad \cdot \left|\widehat{wu}(\xi + y)+\widehat{v\cdot \nabla u}(\xi+y)\right|\, \mathrm{d}\xi \notag \\ 
	&\quad + \int_{| \xi | \leq 2R} \left|  \frac{(1 + | \xi |^2)^{(s+1)/2}}{\alpha+i\xi^{\top}\beta+\xi^{\top}M\xi}  \right|\cdot \left|(\widehat{wu}(\xi + y)+\widehat{v\cdot \nabla u}(\xi+y))-(\widehat{wu}(\xi )+\widehat{v\cdot \nabla u}(\xi))\right|\, \mathrm{d}\xi \notag \\
	\leq& \underbrace{\varepsilon \int_{| \xi | \leq 3R}  \left|\widehat{wu}(\xi )+\widehat{v\cdot \nabla u}(\xi)\right|\, \mathrm{d}\xi }_{(\text{E2-A})}  \notag \\
	&\quad + \underbrace{C_1 \int_{| \xi | \leq 2R}  \left|(\widehat{wu}(\xi + y)+\widehat{v\cdot \nabla u}(\xi+y))-(\widehat{wu}(\xi )+\widehat{v\cdot \nabla u}(\xi))\right|\, \mathrm{d}\xi}_{(\text{E2-B})}. 
\end{align}
\noindent\textbf{$\bullet$ Estimation of (\text{E2-A}).} 
This follows immediately from the definition of the $ \mathcal{B}^s $-norm together with Proposition~\ref{prop:mainprop}:
\begin{equation}\label{eq:estE2-A1}
\varepsilon \int_{| \xi | \leq 3R}  \left|\widehat{wu}(\xi )+\widehat{v\cdot \nabla u}(\xi)\right|\, \mathrm{d}\xi
\leq  \varepsilon\|wu+v\cdot \nabla u\|_{\CB^s}
\leq \varepsilon (\|w\|_{\CB^s}+\|v\|_{\CB^s}).
\end{equation}

\noindent\textbf{$\bullet$ Estimation of (\text{E2-B}).} 
We compute
\begin{align}
&C_1 \int_{| \xi | \leq 2R}  \left|(\widehat{wu}(\xi + y)+\widehat{v\cdot \nabla u}(\xi+y))-(\widehat{wu}(\xi )+\widehat{v\cdot \nabla u}(\xi))\right|\, \mathrm{d}\xi   \notag \\
\leq &C_1 \int_{| \xi | \leq 2R} \int_{\BR^d}\left|\widehat{u}(\eta)\right|\cdot \left| \widehat{w}(\xi+y-\eta)- \widehat{w}(\xi-\eta)\right| \, \mathrm{d}\eta\mathrm{d}\xi  \notag \\
&\quad  + C_1   \sum_{i=1}^{d}\int_{| \xi | \leq 2R} \int_{\BR^d}\left|\widehat{\partial_i u}(\eta)\right|\cdot \left| \widehat{v_i}(\xi+y-\eta)- \widehat{v_i}(\xi-\eta)\right| \, \mathrm{d}\eta\mathrm{d}\xi \notag \\
\leq& C_1 \int_{| \xi | \leq 2R}\int_{\BR^d} \left| \widehat{u}(\eta)\right|\cdot \left| \widehat{w}(\xi+y-\eta)- \phi(\xi+y-\eta)\right|\, \mathrm{d}\eta\mathrm{d}\xi \notag \\
	&\quad +  C_1 \int_{| \xi | \leq 2R}\int_{\BR^d} \left| \widehat{u}(\eta)\right|\cdot \left| \widehat{w}(\xi-\eta)- \phi(\xi-\eta)\right|\, \mathrm{d}\eta\mathrm{d}\xi \notag \\
	&\quad + C_1 \int_{| \xi | \leq 2R}\int_{\BR^d} \left| \widehat{u}(\eta)\right|\cdot \left| \phi(\xi+y-\eta)- \phi(\xi-\eta)\right|\, \mathrm{d}\eta\mathrm{d}\xi \notag \\
	&\quad + C_1\sum_{i=1}^{d} \int_{| \xi | \leq 2R}\int_{\BR^d} \left| \widehat{\partial_i u}(\eta)\right|\cdot \left| \widehat{v_i}(\xi+y-\eta)- \sigma_i(\xi+y-\eta)\right|\, \mathrm{d}\eta\mathrm{d}\xi \notag \\
	&\quad +  C_1\sum_{i=1}^{d} \int_{| \xi | \leq 2R}\int_{\BR^d} \left| \widehat{\partial_i u}(\eta)\right|\cdot \left| \widehat{v_i}(\xi-\eta)- \sigma_i(\xi-\eta)\right|\, \mathrm{d}\eta\mathrm{d}\xi \notag \\
	&\quad + C_1\sum_{i=1}^{d} \int_{| \xi | \leq 2R}\int_{\BR^d} \left| \widehat{\partial_i u}(\eta)\right|\cdot \left| \sigma_i(\xi+y-\eta)- \sigma_i(\xi-\eta)\right|\, \mathrm{d}\eta\mathrm{d}\xi \notag \\
	\leq& 2C_1 \int_{\BR^d}\left| \widehat{u}(\eta)\right| \|\widehat{w}-\phi\|_{L^1(\BR^d)}\, \mathrm{d}\eta + \frac{\varepsilon}{|B(0,2R)|}  \int_{| \xi | \leq 2R}\int_{\BR^d} \left| \widehat{u}(\eta)\right|\, \mathrm{d}\eta\mathrm{d}\xi  \notag \\
	&\quad + 2C_1\sum_{i=1}^{d} \int_{\BR^d}\left| \widehat{\partial_i u}(\eta)\right| \|\widehat{v_i}-\sigma_i\|_{L^1(\BR^d)}\, \mathrm{d}\eta + \frac{\varepsilon}{|B(0,2R)|d}\sum_{i=1}^{d}  \int_{| \xi | \leq 2R}\int_{\BR^d} \left| \widehat{\partial_i u}(\eta)\right|\, \mathrm{d}\eta\mathrm{d}\xi  \notag \\
	\leq& 2\varepsilon \int_{\BR^d}\left| \widehat{u}(\eta)\right|\, \mathrm{d}\eta 
	+\frac{2\varepsilon}{d}\sum_{i=1}^{d}\int_{\BR^d} \left| \widehat{\partial_i u}(\eta)\right|\, \mathrm{d}\eta \notag \\
	\leq& 2\varepsilon \|u\|_{\CB^s}+2\varepsilon \|u\|_{\CB^{s+1}}\leq 4\varepsilon. \label{eq:estE2-A2}
\end{align}
Combining~\eqref{eq:estE2-A1} and~\eqref{eq:estE2-A2}, we obtain
\begin{equation}\label{eq:resultE2}
\begin{split}
	&\int_{| \xi | \leq 2R} \left| \widehat{G}(\xi + y) (1 + | \xi + y |^2)^{(s+1)/2} - \widehat{G}(\xi) (1 + | \xi |^2)^{(s+1)/2} \right|\, \mathrm{d}\xi \\
	\leq& (\|w\|_{\CB^s}+\|v\|_{\CB^s}+4)
	\varepsilon.
\end{split}
\end{equation}

Finally, combining~\eqref{eq:last}, \eqref{eq:E1}, \eqref{eq:E2}, \eqref{eq:resultE1}, and~\eqref{eq:resultE2}, we conclude that for every 
$ u \in \mathcal{B}^{s+1} $ with $ \|u\|_{\mathcal{B}^{s+1}} \le 1 $,
if $ |y| < \min\{\delta_1, \delta_2, \delta_3, \delta_4, \delta_5\} $, then
\begin{equation*}
	\begin{split}
&\int_{\mathbb{R}^d}
\left|
\widehat{\mathcal{T}(u)}(\xi + y) (1 + | \xi + y |^2)^{(s+1)/2}
- \widehat{\mathcal{T}(u)}(\xi) (1 + | \xi |^2)^{(s+1)/2}
\right|
\, \mathrm{d}\xi \\
\le & \left(6+(6+C_2)(\|w\|_{\mathcal{B}^s} + \|v\|_{\mathcal{B}^s})\right) \varepsilon.
\end{split}
\end{equation*}
This completes the verification.
\end{proof}

\begin{lemma}\label{lem:bdd}
Suppose that (A1)--(A3) in Assumption~\ref{assum} hold with $s\geq 0$.  
Then the operator
\[
(I + \mathcal{T})^{-1} \colon \mathcal{B}^{s+1} \longrightarrow \mathcal{B}^{s+1}
\]
is bounded.
\end{lemma}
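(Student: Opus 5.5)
By Lemma~\ref{lem:compactT}, $\mathcal{T}$ is compact on the Banach space $\mathcal{B}^{s+1}$. The Fredholm alternative (Riesz--Schauder theory) then says that $I+\mathcal{T}$ is bijective with bounded inverse as soon as it is injective. The proof therefore reduces to showing that $u\in\mathcal{B}^{s+1}$ with $u+\mathcal{T}(u)=0$ forces $u=0$.

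\emph{Step 1: improve the regularity of $u$.} Suppose $u+\mathcal{T}(u)=0$. Then $u=-\mathcal{T}(u)$, and by Lemma~\ref{lem:mainlem} the image of $\mathcal{T}$ lies in $\mathcal{B}^{s+2}$. Hence $u\in\mathcal{B}^{s+2}\subseteq C_b^2(\BR^d)$ by Proposition~\ref{prop:mainprop}. Applying $\alpha+\beta\cdot\nabla-\nabla\cdot A(x)\nabla$ to both sides shows that $u$ is a classical solution of the homogeneous equation
\[
-\nabla\cdot(A\nabla u)+b\cdot\nabla u+cu=0 \quad\text{on } \BR^d .
\]
Expanding the divergence gives the nondivergence form
\[
-\sum_{i,j=1}^d a_{ij}\partial_{ij}u+\tilde b\cdot\nabla u+cu=0,\qquad \tilde b_j=b_j-\sum_{i=1}^d\partial_i e_{ij}.
\]
Here $\tilde b$ is bounded and continuous because $e_{ij}\in\mathcal{B}^{s+1}$. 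The matrix $A$ is uniformly elliptic by Remark~\ref{rmk:mainrmk}, and $c\ge 0$.

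\emph{Step 2: decay at infinity.} Since $\widehat u\in L^1$, the Riemann--Lebesgue lemma gives $u(x)\to0$ as $|x|\to\infty$.

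\emph{Step 3: maximum principle.} Suppose $\sup u>0$. Because $u$ is continuous and vanishes at infinity, the supremum is attained at some $x_0$. At $x_0$ we have $\nabla u(x_0)=0$, and $\sum_{i,j} a_{ij}\partial_{ij}u(x_0)\le 0$ because the Hessian is negative semidefinite and $A$ is positive definite. The equation then forces $c(x_0)u(x_0)\le 0$.

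If $c(x_0)>0$, this contradicts $u(x_0)>0$. The difficulty is that only $c\ge0$ is assumed, so $c(x_0)=0$ is possible.

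\emph{Main obstacle: the case $c(x_0)=0$.} I would handle it in one of two ways.
\begin{itemize}
\item Use the weak maximum principle on a large ball $B_\rho$ with $c\ge0$ (e.g.\ Gilbarg--Trudinger, Thm.~3.1/Cor.~3.2). This gives $\sup_{B_\rho}u\le\sup_{\partial B_\rho}u^+$, and the right-hand side tends to $0$ as $\rho\to\infty$.
\item Alternatively, use the strong maximum principle: an interior positive maximum forces $u$ to be constant, and a constant vanishing at infinity is zero.
\end{itemize}
The same argument applied to $-u$ gives $\inf u\ge 0$, so $u\equiv0$.

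Injectivity then yields bounded invertibility of $I+\mathcal{T}$ on $\mathcal{B}^{s+1}$. The bound on $\|(I+\mathcal{T})^{-1}\|$ is not explicit, since it comes from the Fredholm theory; this is why the constant in Theorem~\ref{thm:regularitythm} is not quantitative.
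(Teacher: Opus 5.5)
Your proposal is correct and follows essentially the same route as the paper. Both arguments use the Fredholm alternative for the compact perturbation, the bootstrap $u=-\mathcal{T}(u)\in\mathcal{B}^{s+2}\subseteq C^2_b$, decay via Riemann--Lebesgue, and the weak maximum principle with $c\ge 0$ on large balls to force $u\equiv 0$. Your explicit rewriting in nondivergence form, with $\tilde b_j=b_j-\sum_i\partial_i e_{ij}$ continuous and bounded, is a detail the paper leaves implicit when it cites Evans's maximum principle, and it is what justifies that citation.
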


\begin{proof}
Since $ \mathcal{T}\colon \mathcal{B}^{s+1}\rightarrow \mathcal{B}^{s+1} $ has been proved to be compact in Lemma~\ref{lem:compactT},  
the operator $ I + \mathcal{T} $ is a Fredholm operator with index zero.  
Therefore, to show that $ I + \mathcal{T} $ admits a bounded inverse,  
it suffices to verify that $ I + \mathcal{T} $ is injective.

Suppose for the sake of contradiction that there exists a nonzero function  
$ u \in \mathcal{B}^{s+1} $ satisfying
\begin{equation}
\label{ITu}
(I + \mathcal{T})u = 0.
\end{equation}

We first claim that $ u \in \mathcal{B}^{s+2} $.  
Indeed, equation~\eqref{ITu} is equivalent to
\[
(\alpha +\beta\cdot \nabla - \nabla \cdot A(x) \nabla)u = -w u - v \cdot \nabla u.
\]
The right-hand side lies in $ \mathcal{B}^s $ by Proposition~\ref{prop:mainprop};  
by Lemma~\ref{lem:mainlem} it follows that $ u \in \mathcal{B}^{s+2} $.

Assume $ u(x_0) \neq 0 $ for some $ x_0 \in \mathbb{R}^d $.  
Since $ \widehat{u} \in L^1(\mathbb{R}^d) $, the Riemann--Lebesgue lemma gives
\[
\lim_{|x| \to \infty} |u(x)| = 0.
\]
Hence there exists $ R > |x_0| $ such that 
\begin{equation}
\label{eq:uR}
|u(x)| < |u(x_0)| 
\quad \text{for all } x \in \mathbb{R}^d \text{ with } |x| \ge R.
\end{equation}

By Proposition \ref{prop:mainprop}, $u\in \CB^{s+2}$ implies $ u \in C^2(\mathbb{R}^d) $.  
From~\eqref{ITu} it also follows that $ u $ satisfies
\[
- \nabla \cdot (A(x) \nabla u) + b(x) \cdot \nabla u + c(x) u = 0.
\]
Applying the weak maximum principle~\cite[Theorem~6.4.2]{Evans10} to $ u $ on $ B(0,R) $ yields
\[
\sup_{|x| \le R} |u(x)|
= \sup_{|x| = R} |u(x)|.
\]
Therefore, there exists a point $ x' \in \mathbb{R}^d $ with $ |x'| = R $ such that 
\[
|u(x')| \ge |u(x_0)|.
\]
This contradicts \eqref{eq:uR}.
\end{proof}

\begin{proof}[\textbf{Proof of Theorem~\ref{thm:regularitythm}}]
The existence and uniqueness of $ u^* \in \mathcal{B}^{s+1} $ follow from the injectivity of 
$ (\alpha+\beta\cdot \nabla - \nabla \cdot A(x) \nabla)^{-1} $ and $ (I + \mathcal{T})^{-1} $,  
as shown in Lemma~\ref{lem:mainlem} and in the proof of Lemma~\ref{lem:bdd} respectively, together with
\[
u^*
= (I + \mathcal{T})^{-1}
(\alpha+\beta\cdot \nabla - \nabla \cdot A(x) \nabla)^{-1} f.
\]
Applying the same regularity bootstrap argument used in the proof of Lemma~\ref{lem:bdd},  
we conclude that $ u^* \in \mathcal{B}^{s+2} $.

Applying Proposition~\ref{prop:mainprop} and Lemma~\ref{lem:mainlem} to 
\[
(\alpha+\beta\cdot \nabla - \nabla \cdot A(x) \nabla)u^* = f - w u^*-v\cdot \nabla u^*,
\]
we obtain
\begin{equation}\label{u2}
\norm{u^*}_{\mathcal{B}^{s+2}}
\le L\norm{f - wu^*-v\cdot \nabla u^*}_{\mathcal{B}^s} 
\le L
    \left( \norm{f}_{\mathcal{B}^s} + (\norm{w}_{\mathcal{B}^s}+ \|v\|_{\CB^{s}})\norm{u^*}_{\mathcal{B}^{s+1}} \right). 
\end{equation}
By Lemma~\ref{lem:bdd} and Lemma~\ref{lem:mainlem}, we have 
\begin{equation}\label{u1}
\begin{split}
\|u^*\|_{\mathcal{B}^{s+1}}
=& \left\| (I + \mathcal{T})^{-1} 
 (\alpha+\beta\cdot \nabla - \nabla \cdot A(x) \nabla)^{-1} f \right\|_{\mathcal{B}^{s+1}} \\
\le & L
\left\| (I + \mathcal{T})^{-1} \right\|_{\mathcal{B}^{s+1} \to \mathcal{B}^{s+1}} 
\|f\|_{\mathcal{B}^s}.
\end{split} 
\end{equation}
Combining~\eqref{u2} and~\eqref{u1}, we obtain
		\begin{equation*}
			\norm{u^*}_{\CB^{s+2}}\leq L \left(1+ L(\norm{w}_{\CB^s}+\norm{v}_{\CB^s})\cdot \norm{(I+\CT)^{-1}}_{\CB^{s+1}\rightarrow\CB^{s+1}}\right)\norm{f}_{\CB^s},
		\end{equation*}
where $L$ and $\CT$ are given by \eqref{eq:defL} and \eqref{eq:opT}.   
Let
\begin{equation}
\label{eq:elliptic-bound}
C\coloneqq L \left(1+ L(\norm{w}_{\CB^s}+\norm{v}_{\CB^s})
\cdot \norm{(I+\CT)^{-1}}_{\CB^{s+1}\rightarrow\CB^{s+1}}\right).
\end{equation}
This completes the proof.
\end{proof}

\begin{proof}[\textbf{Proof of Corollary~\ref{cor:dimind}}]
For any \( u \in \CB^{s+1} \), by Lemma~\ref{lem:mainlem} and
Proposition~\ref{prop:mainprop}, we obtain
\begin{equation}
\|\CT(u)\|_{\CB^{s+1}}
\le 2^{-1/2}\|\CT(u)\|_{\CB^{s+2}}
\le 2^{-1/2}L\|wu+v\cdot \nabla u\|_{\CB^s}
\le
\frac{\|w\|_{\CB^s}+\|v\|_{\CB^s}}
{\min\{\alpha,m\}-\|E\|_{\CB^{s+1}}}
\|u\|_{\CB^{s+1}}.
\end{equation}
Consequently, under (A3') in Assumption~\ref{assum}, it follows that
\[
\|\CT\|_{\CB^{s+1}\to \CB^{s+1}}
\le
\frac{\|w\|_{\CB^s}+\|v\|_{\CB^s}}
{\min\{\alpha,m\}-\|E\|_{\CB^{s+1}}}
<1.
\]
A Neumann series argument yields
\begin{equation}
\label{eq:newIT}
\|(I+\CT)^{-1}\|_{\CB^{s+1}\to \CB^{s+1}}
\le \sum_{i=0}^{\infty}\|\CT\|_{\CB^{s+1}\to \CB^{s+1}}^{\,i}
\le
\frac{\min\{\alpha,m\}-\|E\|_{\CB^{s+1}}}
{\min\{\alpha,m\}-\|E\|_{\CB^{s+1}}-\|w\|_{\CB^s}-\|v\|_{\CB^s}}.
\end{equation}
Combining \eqref{eq:elliptic-bound} and \eqref{eq:newIT}, we obtain
\begin{equation}
    \label{eq:newC}
    \begin{split}
        C &\le
\frac{2(\min\{\alpha,m\}-\|E\|_{\CB^{s+1}}+\|w\|_{\CB^s}+\|v\|_{\CB^s})}
{(\min\{\alpha,m\}-\|E\|_{\CB^{s+1}})(\min\{\alpha,m\}-\|E\|_{\CB^{s+1}}-\|w\|_{\CB^s}-\|v\|_{\CB^s})} \\
&\leq \frac{2(\min\{\alpha,m\}+K)}
{\min\{\alpha,m\}(\min\{\alpha,m\}-K)}. 
    \end{split}
\end{equation}
The corollary then follows from Theorem~\ref{thm:regularitythm} and~\eqref{eq:newC}.
\end{proof}

\appendix

\section{An Alternative Proof of Lemma~\ref{lem:mainlem}}
\label{app}

In this appendix, we present an alternative proof of Lemma~\ref{lem:mainlem} using methods from perturbation theory. A similar idea was used in the proof of \cite[Theorem~3.8]{shuailu25} in a special case of our setting. Let $\CL$ denote the elliptic operator in \eqref{eq:rewrite}, given by
\[
\CL \coloneqq -\nabla \cdot (A(x)\nabla) + \beta\cdot\nabla+\alpha.
\]
We introduce the constant-coefficient operator
\[
\CL_0 \coloneqq -\nabla\cdot (M\nabla)+\beta\cdot\nabla+\alpha.
\]
A standard Fourier multiplier argument yields the coercivity estimate
\[
\|\CL_0 u\|_{\CB^s} \ge \frac{\min\{\alpha,m\}}{2}\,\|u\|_{\CB^{s+2}},
\quad \text{for all } u\in \CB^{s+2},
\]
and consequently
\begin{equation}\label{eq:L0inv}
\|\CL_0^{-1} g\|_{\CB^{s+2}}
\le \frac{2}{\min\{\alpha,m\}}\,\|g\|_{\CB^s},
\quad \text{for all } g\in \CB^s .
\end{equation}

Since $A(x)=M+E(x)$, we may write
\[
\CL = \CL_0 - \nabla\cdot(E(x)\nabla)
      = \CL_0(\Id-\CK),
\qquad
\CK \coloneqq \CL_0^{-1}\,\nabla\cdot\bigl(E(x)\nabla\bigr).
\]
Therefore, the equation $\CL u=f$ is equivalent to
\begin{equation}\label{eq:IdK}
(\Id-\CK)u = \CL_0^{-1}f .
\end{equation}

We now estimate the operator $\CK$ on $\CB^{s+2}$.  
Using the divergence form
\[
\nabla\cdot (E(x)\nabla u)
= \sum_{i=1}^d \partial_i\left(\sum_{j=1}^d e_{ij}\partial_j u\right),
\]
together with the two basic Barron space estimates from Proposition~\ref{prop:mainprop},
we obtain, for each $i,j$,
\[
\|\partial_i(e_{ij}\partial_j u)\|_{\CB^s}
\le 2^{-1/2}\|e_{ij}\partial_j u\|_{\CB^{s+1}}
\le 2^{-1/2}\|e_{ij}\|_{\CB^{s+1}}\|\partial_j u\|_{\CB^{s+1}}
\le \frac{1}{2}\|e_{ij}\|_{\CB^{s+1}}\|u\|_{\CB^{s+2}}.
\]
Summing over $i$ and $j$ yields
\begin{equation}\label{eq:divEgrad}
\|\nabla\cdot (E(x)\nabla u)\|_{\CB^s}
\le \frac{1}{2}\|E\|_{\CB^{s+1}}\|u\|_{\CB^{s+2}}.
\end{equation}
Combining \eqref{eq:L0inv} and \eqref{eq:divEgrad}, we obtain
\[
\|\CK u\|_{\CB^{s+2}}
\le \frac{2}{\min\{\alpha,m\}}\cdot \frac{1}{2}\|E\|_{\CB^{s+1}}\|u\|_{\CB^{s+2}}
= \frac{\|E\|_{\CB^{s+1}}}{\min\{\alpha,m\}}\,
\|u\|_{\CB^{s+2}}.
\]
Hence, in order for the operator $\Id-\CK$ to be invertible on the Banach space $B^{s+2}$, it suffices that
\[
\|\CK\|_{\CB^{s+2}\to \CB^{s+2}}
\le \frac{\|E\|_{\CB^{s+1}}}{\min\{\alpha,m\}} < 1 .
\]
Under this condition, a Neumann series argument yields
\[
\|(\Id-\CK)^{-1}\|
\le \frac{1}{1-\|\CK\|}
\le \frac{1}{1-\frac{\|E\|_{\CB^{s+1}}}{\min\{\alpha,m\}}}
= \frac{\min\{\alpha,m\}}{\min\{\alpha,m\}-\|E\|_{\CB^{s+1}}}.
\]
Applying $(\Id-\CK)^{-1}$ to \eqref{eq:IdK} yields the existence and uniqueness of the solution $u$, and 
$$
\|u\|_{\CB^{s+2}}
\le \|(\Id-\CK)^{-1}\|\,\|\CL_0^{-1}f\|_{\CB^{s+2}}
\le
\frac{\min\{\alpha,m\}}{\min\{\alpha,m\}-\|E\|_{\CB^{s+1}}}
\cdot
\frac{2}{\min\{\alpha,m\}}\|f\|_{\CB^s}
=
L\|f\|_{\CB^s}.
$$

\section{Examples of Dimension-Independent Approximation}
\label{app:mainremark}
In this appendix, we present an example to illustrate Corollary~\ref{cor:maincor}. Let
\[
\CL u \coloneqq -\nabla \cdot (A(x)\nabla u)
+ b(x)\cdot \nabla u + c(x)u,
\]
and let $k\in \BN$, $\alpha,m>0$, and $K,F\ge 0$ be fixed constants.
Denote by $P(k,\alpha,m,K,F)$ the class of PDEs of the form $\CL u = f$ on $\BR^d$ satisfying the following conditions:
\begin{enumerate}[(1)]
    \item The spatial dimension $d\in\BN_{+}$ is arbitrary;
    \item the coefficients of $\CL$ satisfy (A1), (A2), and (A3') in Assumption~\ref{assum} with  $s=k$ and constants $\alpha$, $m$, and $K$;
    \item the source term satisfies $\|f\|_{\CB^k}\le F$.
\end{enumerate}
Assume additionally that the volume of the domain $\Omega\subseteq \mathbb{R}^d$ satisfies
$|\Omega|\le V$ for some constant $V>0$ independent of the dimension.
Then Corollary~\ref{cor:maincor} implies that, for any PDE in
$P(k,\alpha,m,K,F)$, the unique solution
$u^*\in \CB^{k+2}$ can be approximated (with error $\leq \varepsilon $) in the $H^{k+2}(\Omega)$ norm
by a two-layer neural network of the form~\eqref{eq:neural}
with at most
\[
n=\lceil C'\varepsilon^{-2}\rceil
\]
neurons, where
\[
C'
\le
\left(
\frac{2(\min\{\alpha,m\}+K)}
{\min\{\alpha,m\}(\min\{\alpha,m\}-K)}
\right)^2
V F^2.
\]
In particular, the bound on $n$ does not depend on the spatial dimension $d$.

A more precise example (in the Introduction) is given below. Let 
\[
k=0, \quad \alpha=m=1, \quad K=\frac{1}{2},\quad F=1,
\]
and let $\Omega=[0,1]^d$, so that $|\Omega|=1$ and hence $V=1$.
The condition $F=1$ is attainable. For example, the standard Gaussian density function in an arbitrary dimension,
\[
\varphi(x)=\frac{1}{(2\pi)^{d/2}}e^{-|x|^2/2}
\]
satisfies
\[
\|\varphi\|_{\CB^0}=\frac{1}{(2\pi)^{d/2}}<1.
\]
Corollary~\ref{cor:maincor} then implies that, for every PDE in $P(0,1,1,1/2,1)$, for instance,
\[
-\nabla \cdot (A(x)\nabla u) + b(x) \cdot \nabla u + c(x)u=\varphi,\quad \|A-I_d\|_{\mathcal{B}^{1}}+\|b-\mathbf{1}\|_{\mathcal{B}^0}+\|c-1\|_{\mathcal{B}^0}\leq \frac{1}{2},
\]
the unique solution
$u^*\in \CB^{2}$ can be approximated (with error $\leq \varepsilon $) by a two-layer neural network
of the form~\eqref{eq:neural} in the $H^{2}(\Omega)$ norm with at most
\[
n=\lceil 36 \varepsilon^{-2}\rceil
\]
neurons.

\addcontentsline{toc}{section}{References}
\bibliographystyle{alpha}
\bibliography{reference.bib}


\vspace{1em}

\noindent 
\small{\textsc{(ZC) Department of Mathematics, Massachusetts Institute of Technology, 77 Massachusetts Avenue, Cambridge, MA 02139, USA}}

\noindent 
\small{\textit{Email address}: \texttt{ziang@mit.edu}}

\vspace{1em}

\noindent 
\small{\textsc{(LH) Department of Mathematics and Statistics, Boston University, 665 Commonwealth Avenue, Boston, MA 02215, USA}}

\noindent 
\small{\textit{Email address}: \texttt{lqhuang@bu.edu}}

\vspace{1em}

\noindent 

\noindent
\small{(MY) \textsc{Department of Mathematics, Texas A\&M University, College Station, TX 77843, USA}}

\noindent 
\small{\textit{Email address}: \texttt{yangmx@tamu.edu}}

\vspace{1em}

\noindent 
\small{\textsc{(SZ) Institut de Math{\'e}matiques de Toulouse, 118 route de Narbonne, F-31062 Toulouse Cedex 9, France}}

\noindent 
\small{\textit{Email address}: \texttt{zhoushx98@outlook.com}}


\end{document}